\newtheorem{prop}{Proposition}[section]
\newtheorem{thm}[prop]{Theorem}
\newtheorem{theorem}[prop]{Theorem}
\newtheorem{corollary}[prop]{Corollary}
\theoremstyle{definition}
\newtheorem{definition}[prop]{Definition}
\theoremstyle{remark}
\newtheorem{rmk}[prop]{Remark}
\newtheorem{remark}[prop]{Remark}
\newtheorem{example}[prop]{Example}
\newtheorem{question}[prop]{Question}
\newcommand{\G}{\mathbb{G}}
\newcommand{\Sym}{\mathrm{Sym}}
\newcommand{\pp}{\mathbb{P}}
\newcommand{\oo}{\mathcal{O}}
\newcommand{\rk}{\mathrm{rk}}
\newcommand{\CC}{\mathbb{C}}
\def\To{\longrightarrow}
\def\rmapdown#1{\Big\downarrow\rlap{$\vcenter{\hbox{$\scriptstyle
#1$}}$}}
\def\lmapdown#1{\Big\downarrow\llap{$\vcenter{\hbox{$\scriptstyle
#1\;\;\,$}}$}}
\def\lmapup#1{\Big\uparrow\llap{$\vcenter{\hbox{$\scriptstyle
#1\;\;\,$}}$}}
\def\squaremap#1#2#3#4#5#6#7#8{\mathop{
\begin{array}{ccccc}
#1 & \stackrel{#2}{\To}&  #3\\
\lmapdown{#4} & { } & \rmapdown{#5}\\
#6 & \stackrel{#7}{\To} & #8
\end{array}
}\nolimits}
\title{Holomorphic symmetric differentials and a birational characterization of Abelian Varieties}
\author{Ernesto C. Mistretta}
\date{}
\begin{document}

\maketitle

%

\begin{abstract}
A generically generated  vector bundle on a smooth projective variety
yields a rational map to a Grassmannian,
called Kodaira map.
We answer a previous question, raised by the asymptotic behaviour of such maps,
giving rise to a  birational characterization of abelian varieties.

In particular we prove that, under the conjectures of the Minimal Model Program,
a smooth projective variety is birational to an abelian variety if and only if it has Kodaira dimension 0 and 
some symmetric power of its cotangent sheaf is generically generated by its global sections.

\end{abstract}

\section{Introduction}

The aim of this work is to answer positively a question raised in the framework of the investigation on stable base loci for vector bundles started in \cite{bkkmsu}.

In the recent work 
\cite{mistrurbi} we extended the construction of the Iitaka fibration to the case of higher rank vector bundles,
and the natural setting to do so is to consider asymptotically generically generated (AGG) vector bundles. 
A vector bundle $E$ on a projective variety $X$ 
is said to be \emph{asymptotically generically generated} when 
some symmetric power $\Sym^m E$ 
is generated over a nonempty open subset $U \subseteq X$ by its global sections $H^0(X, \Sym^m E)$.

In the same work we proved that if the cotangent bundle $\Omega_X$ of a smooth projective variety $X$ with Kodaira dimension $0$ is strongly semiample,
then the variety must be isomorphic to an abelian Variety (we say that a vector bundle $E$ on a projective variety $X$ 
is \emph{strongly semiample} when 
some symmetric power $\Sym^m E$ 
is globally generated). That led to consider the question whether the generic condition AGG can give a birational characterisation of  abelian varieties.

In the present work we give a positive answer to this question under the hypothesis that the main conjecture of the minimal 
model program be satisfied:

\begin{theorem}
\label{mainthnm}
Let $X$ be a smooth variety of Kodaira dimension $0$. Suppose that $X$ admits a minimal model $Y$ and that abundance conjecture holds for $Y$. If the vector bundle $\Omega_X$ is Asymptotically Generically Generated then $X$ is birational to an abelian variety.
\end{theorem}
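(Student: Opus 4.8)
The plan is to push the whole question onto the minimal model $Y$, to show there that $\Omega_Y^{[1]}$ must be the trivial sheaf (so that $Y$ is an abelian variety), and to use the smoothness of $X$ in an essential way: a purely birational argument cannot work, since for instance a resolution of $(E\times E)/\{\pm 1\}$ is a Kummer $K3$, which is not birational to an abelian variety even though the singular quotient has trivial reflexive symmetric differentials. So the subtle point will be that the relevant symmetric differentials, harmless on $Y$, acquire poles on $X$.

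\textbf{Reduction to $Y$.} Since $Y$ is a minimal model and $\kappa(Y)=\kappa(X)=0$, abundance for $Y$ gives that $K_Y$ is semiample with $\kappa=0$, hence $\mathcal{O}_Y(NK_Y)\cong\mathcal{O}_Y$ for some $N>0$; in particular $K_Y$ is torsion in $\mathrm{Cl}(Y)$. Let $f\colon X\to Y$ be a birational morphism; it is an isomorphism over the big open set $Y_{\mathrm{reg}}$, so $H^0(X,\Sym^m\Omega_X)\hookrightarrow H^0(Y,\Sym^{[m]}\Omega_Y)$ for every $m$ (restrict to $Y_{\mathrm{reg}}$, then take the reflexive extension), and the open subset where $\Sym^m\Omega_X$ is globally generated maps into $Y_{\mathrm{reg}}$. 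Hence, if $\Omega_X$ is AGG then $\Sym^{[m]}\Omega_Y$ is generically globally generated for suitable $m$. Composing the multiplication maps $\Sym^i\otimes\Sym^j\twoheadrightarrow\Sym^{i+j}$ shows that a symmetric power of an AGG sheaf is AGG, so I may take $m$ divisible by any prescribed integer.

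\textbf{Triviality of $\Omega_Y^{[1]}$.} Generic global generation of $\Sym^{[m]}\Omega_Y$ forces its determinant $\mathcal{O}_Y(eK_Y)$, $e=\binom{\dim Y+m-1}{\dim Y}$, to be effective; being also torsion it is trivial. Now $\mathrm{rk}$-many global sections of $\Sym^{[m]}\Omega_Y$ forming a basis at the generic point give a map $\mathcal{O}_Y^{\oplus s}\to\Sym^{[m]}\Omega_Y$ whose determinant $\mathcal{O}_Y\to\mathcal{O}_Y$ is a nonzero constant, hence invertible, so the map is an isomorphism: $\Sym^{[m]}\Omega_Y\cong\mathcal{O}_Y^{\oplus s}$. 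Since $\Sym^{[m]}$ detects semistability and the vanishing of $\widehat c_1$ and $\widehat c_2$, the reflexive sheaf $\Omega_Y^{[1]}$ is semistable with vanishing first and second Chern classes; by the Uhlenbeck--Yau/Bando--Siu theory for klt spaces with numerically trivial canonical class it is therefore flat, coming from a representation of $\pi_1(Y_{\mathrm{reg}})$; and a representation whose $m$-th symmetric power is trivial is scalar valued, so $\Omega_Y^{[1]}\cong N_Y^{[\oplus d]}$, $d=\dim Y$, with $N_Y$ a torsion reflexive sheaf of rank one, of some order $k$. The cyclic cover $\pi\colon Y'\to Y$ attached to $N_Y$ is quasi-étale and $\Omega_{Y'}^{[1]}\cong\pi^{[*]}(N_Y^{[\oplus d]})\cong\mathcal{O}_{Y'}^{\oplus d}$; by the Lipman--Zariski property for klt spaces $Y'$ is smooth, hence (trivial tangent bundle, projective) an abelian variety, and $Y\cong Y'/(\mathbb{Z}/k)$; comparing the $\mathbb{Z}/k$-action on $\Omega_{Y'}^{[1]}$ with its descent $N_Y^{[\oplus d]}$ shows that the linear parts of the action are homotheties, so the group acts by homotheties composed with translations.

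\textbf{Ruling out $k\geq 2$ --- the main obstacle.} If $k=1$ then $\Omega_Y^{[1]}$ is trivial, $Y$ is an abelian variety, and $X$ is birational to it, as desired. Suppose $k\geq 2$. Then each nontrivial element of $\mathbb{Z}/k$ acts on the abelian variety $Y'$ with a nonempty finite fixed locus (the homothety $\zeta-1$ is an isogeny), so $Y$ has isolated quotient singularities of type $\tfrac1k(1,\dots,1)$. Choosing $m$ divisible by $k$ we get $\Sym^m\Omega_{Y_{\mathrm{reg}}}\cong\mathcal{O}^{\oplus s}$, so $H^0(X,\Sym^m\Omega_X)\hookrightarrow H^0(Y_{\mathrm{reg}},\mathcal{O}^{\oplus s})=\mathbb{C}^s$, i.e. every section of $\Sym^m\Omega_X$ is a "constant" symmetric differential of $Y'$. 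A local computation on a resolution of $\tfrac1k(1,\dots,1)$ --- e.g. on the quotient of the blow-up of the orbifold point, where $dz_i$ becomes a one-form with a pole of positive order along the exceptional divisor --- shows that no nonzero constant symmetric differential of degree $m$ extends holomorphically over the exceptional locus; hence $H^0(X,\Sym^m\Omega_X)=0$, contradicting that $\Sym^m\Omega_X$ is generically globally generated of positive rank. Therefore $k=1$, and the proof is complete. The genuinely delicate step is exactly this pole computation, i.e. the use of smoothness of $X$ to exclude the étale-quotient-of-abelian-variety examples that a birational argument would not see.
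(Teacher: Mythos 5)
Your proof reaches the same two--stage structure as the paper --- (i) the minimal model $Y$ is a quotient of an abelian variety by a group acting linearly through homotheties, and (ii) a singular such quotient is excluded because a nonzero ``constant'' symmetric differential acquires a pole on the resolution of an isolated $\tfrac1r(1,\dots,1)$ point --- and stage (ii), including the decisive local computation $f^*dy_1=r\,x_1^{r-1}dx_1$ on the one-blow-up resolution, is essentially identical to the paper's. Stage (i) is where you genuinely diverge. The paper restricts $\Sym^k\Omega_{\widetilde Y}$ to a general complete-intersection surface missing $Y^{\mathrm{sing}}$ and $\mathrm{Exc}(\pi)$, deduces triviality there, hence $c_2(\Omega_{\widetilde Y})\cdot H_1\cdots H_{n-2}=0$, and then quotes the Greb--Kebekus--Peternell theorem as a black box; in the singular case it additionally needs the Greb--Kebekus--Kov\'acs extension theorem for $\Sym^k\Omega(\log E)$ to compare sections on $\widetilde Y$ with $G$-invariant sections on $A$. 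You instead prove the stronger global statement $\Sym^{[m]}\Omega_Y\cong\mathcal O_Y^{\oplus s}$ (the effective-plus-torsion determinant argument is clean and correct), and then run the flatness machinery (semistability, vanishing of $\widehat c_1,\widehat c_2$, Bando--Siu on klt spaces, quasi-\'etale cyclic covers, Lipman--Zariski) yourself. This buys you a real simplification later: with $\Sym^{[m]}\Omega_Y$ literally trivial you can inject $H^0(X,\Sym^m\Omega_X)$ into the constants directly and never need the log-extension theorem. The cost is that you are re-deriving, somewhat loosely, the content of the GKP theorem rather than citing it.

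Two points need shoring up. First, ``let $f\colon X\to Y$ be a birational morphism'' is not available: $X\dashrightarrow Y$ is only a birational map; what saves you is that it extracts no divisors, so it is an isomorphism over a big open subset of $Y$, and reflexivity of $\Sym^{[m]}\Omega_Y$ does the rest --- say this. Second, and more seriously, the step ``$\Sym^{[m]}\Omega_Y\cong\mathcal O^{\oplus s}$ hence $\Sym^m\rho$ is trivial hence $\rho$ is scalar'' conflates holomorphic triviality of a flat sheaf with triviality of the underlying representation; these coincide only once you know the representation is unitary (or has finite image), so that global holomorphic sections are parallel. That input is available from the GKP theory on klt spaces with $K\equiv 0$, but as written it is a gap. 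Also note that for the final contradiction you do not need $H^0(X,\Sym^m\Omega_X)=0$: generic generation forces $H^0$ to equal all of $\mathbb C^s$, so it suffices that the single monomial $(du_1)^m$ fails to extend, which avoids having to control arbitrary linear combinations.
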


In particular in dimension at most 3 we obtain:

\begin{corollary}
\label{maincor}

Let $X$ be a smooth projective variety of dimension $n \leqslant 3$. The following conditions are equivalent:

\begin{enumerate}
\item The variety $X$ is birational to an abelian variety, in particular the Albanese map $a_X: X \to \mathrm{Alb}(X)$ is surjective and birational.
\item The variety $X$ has Kodaira dimension $\mathrm{kod}(X, K_X)=0$ and the cotangent bundle $\Omega_X$ is asymptotically
generically  generated.
\item The cotangent bundle $\Omega_X$ has stable base locus 
$\mathbb{B}(\Omega_X) \neq X$ 
and Iitaka  dimension $\mathrm{k}(X, \Omega_X)=0$.

\end{enumerate}

\end{corollary}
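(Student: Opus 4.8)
The plan is to deduce Corollary~\ref{maincor} from Theorem~\ref{mainthnm}, using that in dimension $n\leqslant 3$ both the existence of minimal models and the abundance conjecture are theorems, so that the hypotheses of Theorem~\ref{mainthnm} are met automatically once $\mathrm{kod}(X,K_X)=0$ is known. I would begin by observing that all three conditions are birational invariants: this is clear for the Kodaira dimension, and the spaces $H^0(X,\Sym^m\Omega_X)$ are birational invariants of smooth projective varieties (a birational morphism onto a smooth variety is an isomorphism away from a subset of codimension $\geqslant 2$, and $\Sym^m\Omega$ is reflexive, so sections extend by Hartogs), whence the AGG property, the Iitaka dimension $\mathrm{k}(X,\Omega_X)$ and the stable base locus $\mathbb{B}(\Omega_X)$ are birational invariants as well. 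I would then prove the cycle $(1)\Rightarrow(2)\Rightarrow(3)\Rightarrow(1)$.

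\emph{$(1)\Rightarrow(2)$:} on an abelian variety $\Omega$ is trivial, hence every $\Sym^m\Omega$ is globally generated, so $\Omega$ is AGG and the Kodaira dimension is $0$; as both are birational invariants, they pass to $X$. \emph{$(2)\Rightarrow(3)$:} the equivalence ``$\mathbb{B}(\Omega_X)\neq X$ $\iff$ $\Omega_X$ AGG'' is formal --- the closed sets $\mathrm{Bs}(\Sym^{m!}\Omega_X)$ form a descending chain (a point at which $\Sym^m\Omega_X$ is globally generated is one at which $\Sym^{km}\Omega_X$ is, via $\Sym^k\Sym^m\Omega_X\twoheadrightarrow\Sym^{km}\Omega_X$), so it stabilises to $\mathbb{B}(\Omega_X)$, which is proper exactly when some $\Sym^m\Omega_X$ is generated by global sections over a dense open set. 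Moreover, if $\Sym^m\Omega_X$ is generically globally generated then so is its determinant $\det\Sym^m\Omega_X=K_X^{\otimes b_m}$ (with $b_m>0$), which is therefore effective, so $\mathrm{kod}(X,K_X)\geqslant 0$; and composing the Kodaira map of $\Sym^m\Omega_X$ with the Plücker embedding exhibits it as the map attached to a linear subsystem of $|K_X^{\otimes b_m}|$, so that $\mathrm{k}(X,\Omega_X)\leqslant\mathrm{kod}(X,K_X)$. Together with $\mathrm{k}(X,\Omega_X)\geqslant 0$ (the Kodaira map exists once $\Omega_X$ is AGG), hypothesis $(2)$ forces $\mathrm{k}(X,\Omega_X)=0$.

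\emph{$(3)\Rightarrow(1)$:} from $\mathbb{B}(\Omega_X)\neq X$ one gets that $\Omega_X$ is AGG. The crucial point is then the reverse comparison $\mathrm{kod}(X,K_X)\leqslant\mathrm{k}(X,\Omega_X)$ valid for an AGG cotangent bundle: granting it, $\mathrm{k}(X,\Omega_X)=0$ gives $\mathrm{kod}(X,K_X)=0$. I would extract this comparison from the structure theory of AGG vector bundles developed in \cite{mistrurbi}: the asymptotic Kodaira map of $\Omega_X$ yields a fibration $X\dashrightarrow W$ with $\dim W=\mathrm{k}(X,\Omega_X)$ along whose general fibre $F$ the restricted symmetric differentials become generically trivial, so that (by the results of \cite{mistrurbi}) $F$ has Kodaira dimension $0$, and then Iitaka's easy addition theorem gives $\mathrm{kod}(X,K_X)\leqslant\dim W$. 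With $\mathrm{kod}(X,K_X)=0$ in hand, and since $n\leqslant 3$ guarantees that $X$ has a minimal model to which abundance applies, Theorem~\ref{mainthnm} yields a birational map $\phi\colon X\dashrightarrow B$ onto an abelian variety. Finally $\mathrm{Alb}(X)\cong\mathrm{Alb}(B)=B$, and since a rational map into an abelian variety is a morphism factoring through the Albanese, the Albanese morphism $a_X\colon X\to\mathrm{Alb}(X)$ coincides with $\phi$ up to a translation; hence $a_X$ is birational, and its image --- closed, irreducible, of dimension $\dim B$ --- is all of $B$, so $a_X$ is also surjective, giving $(1)$.

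The step I expect to be the main obstacle is precisely the comparison $\mathrm{kod}(X,K_X)\leqslant\mathrm{k}(X,\Omega_X)$ invoked in $(3)\Rightarrow(1)$: one must show that a variety whose symmetric differentials have essentially constant Kodaira maps cannot have positive Kodaira dimension, which requires genuine control of symmetric differentials along the Iitaka fibration of $K_X$ and is where the positivity results of \cite{mistrurbi,bkkmsu} enter decisively. Everything else --- birational invariance of symmetric differentials and of the Albanese variety, Noetherianity of base loci, and the validity of the minimal model program and abundance in dimension at most three --- is standard.
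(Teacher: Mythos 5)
Your overall architecture is the same as the paper's: reduce to Theorem~\ref{mainthnm} using that minimal models and abundance are theorems in dimension $\leqslant 3$, note that $\mathrm{kod}(X,K_X)$, the AGG property, and $\mathrm{k}(X,\Omega_X)$ are birational invariants of smooth projective varieties, and relate conditions (2) and (3) through the inequality $\mathrm{k}(X,\Omega_X)\leqslant \mathrm{kod}(X,K_X)$ (via Pl\"ucker and $\det\Sym^m\Omega_X=K_X^{\otimes b_m}$) together with its converse when $\mathrm{k}(X,\Omega_X)=0$. The divergence is exactly at the step you flag as the main obstacle, namely $\mathrm{k}(X,\Omega_X)=0\Rightarrow \mathrm{kod}(X,K_X)=0$. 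You propose to run the Iitaka-type fibration $X\dashrightarrow W$ attached to $\Omega_X$, show the general fibre $F$ has $\mathrm{kod}(F)=0$ because the symmetric differentials restrict generically trivially, and conclude by easy addition. As written this has a genuine gap: generic triviality of $\Sym^m\Omega_X|_F$ only gives an effective, generically non-vanishing section of $K_F^{\otimes b_m}$, hence $\mathrm{kod}(F)\geqslant 0$, not $\mathrm{kod}(F)=0$; upgrading ``generically trivial'' to ``trivial'' is precisely the kind of passage from generic generation to global generation that the whole paper is designed to avoid, so this step cannot be waved through by citing \cite{mistrurbi}.

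The paper's own argument (in the ``Iitaka dimension'' remark of the last section) is shorter and sidesteps the fibre analysis entirely: by \cite{mistrurbi}, for $k\gg 0$ the Kodaira map $\varphi_{\Sym^k\Omega_X}$ factors as the Iitaka fibration $\varphi_{(\det\Omega_X)^{\otimes m}}=\varphi_{|mK_X|}$ followed by a linear projection. If $\mathrm{k}(X,\Omega_X)=0$ the composite has zero-dimensional image, so the image of $\varphi_{|mK_X|}$ is collapsed to a point by a linear projection and is therefore contained in a proper linear subspace of $\pp(H^0(X,mK_X))$ --- impossible for the image of a complete linear system unless that image is already a point. Hence $\mathrm{kod}(X,K_X)=0$ directly. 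I would recommend replacing your fibration-plus-easy-addition step with this non-degeneracy argument; the rest of your proof (the implications $(1)\Rightarrow(2)$, $(2)\Rightarrow(3)$, and the Albanese statement at the end) is correct and matches what the paper leaves implicit.
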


The difference between the second and third statemnts in this corollary lies in the fact that,
according to our constructions in \cite{mistrurbi}, in the strongly semiample case the dimension $k(X, E)$ 
of the image of a Kodaira map 
\[
\varphi_m \colon X \to \G r (H^0 (X, \Sym^m E), \sigma_m(r))
\]
for a globally generated rank $\sigma_M(r)$  vector bundle $\Sym^m E$,   
is always the same as the Iitaka dimension $k(X, \det E)$ of the determinant of $E$, while we cannot say whether this is true for an 
asymptotically generically generated vector bundle as the cotangent bundle in the hypotheses of the Corollary.

In order to prove the  theorem we first prove that within the hypotheses we can apply a criterion due to Greb-Kebekus-Peternell (cf. \cite{GKP}) to show that the minimal model $Y$ of $X$ is a quotient of an abelian variety,
then we show that on such a quotient the cotangent bundle of a resolution cannot be Asymptotically Generically Generated unless the quotient is an abelian variety itself.

\subsection{Acknowledgments} A special thank Gian Pietro Pirola, Antonio Rapagnetta and Francesco Polizzi  for extremely helpful and very pleasant conversations that lead to the present work.
And  to Andreas H\"oring, 
whose comments to a previous work raised the question answered here.

\section{Notation and previous results}

We will work with projective varieties 
over the field $\CC$ of complex numbers. 
When the variety $X$ is smooth,
we will denote $\Omega_X$ the cotangent bundle,
and identify it with the sheaf of K\"ahler differentials on $X$.
As usual we will denote $\Omega^p_X = \bigwedge^p \Omega_X$ the higher exterior powers, identified with the sheaf of holomorphic $p$-forms.

We give in this section the main definitions and results for base loci and Kodaira maps for vector bundles,
most of the definitions can be found in \cite{mistrurbi}.

\subsection{Stable base locus}
\begin{definition}
Let $E$ be a vector bundle on a projective variety $X$.
\begin{enumerate}

\item We call \emph{base locus} of  $E$  the closed subset
\[
\mathrm{Bs}(E) := \{x \in X ~|~ ev_x \colon H^0(X, E) \to E(x) \textrm{ is not surjective} \} \subseteq X
\]
and \emph{stable base locus} the closed subset
\[
\mathbb{B} (E) := \bigcap_{m>0} \mathrm{Bs} (\Sym^m E) \subseteq X
\]

\item We say that a vector bundle $E$ on $X$ is \emph{strongly semiample} if $\mathbb{B}(E) = \emptyset$,
\emph{i.e.} if some symmetric power of $E$ is globally generated.

\item We say that a vector bundle $E$ on $X$ is \emph{generically generated} if ${Bs}(E) \neq X$,
\emph{i.e.} if $E$ is  generated over a nonempty open subset $U \subseteq X$ by its global sections $H^0(X, E)$.

\item We say that a vector bundle $E$ on $X$ is \emph{asymptotically generically generated} if $\mathbb{B}(E) \neq X$.
\end{enumerate}
\end{definition}

\begin{remark}

In general the definition of strong semiampleness is not equivalent to the usual definition of semiampleness:
it is stronger and not equivalent to the fact that $\oo_{\pp(E)}(1)$ is semiample. A simple counterexample to the equivalence  can be found in \cite {mistrurbi}.

\end{remark}

The following theorems are proved in \cite{mistrurbi}:

\begin{theorem}

Let $X$ be a smooth projective variety. Then $X$ is isomorphic to an abelian variety if and only if
the cotangent bundle is strongly semiample and $X$ has Kodaira dimension $0$.

\end{theorem}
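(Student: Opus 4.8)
The implication from ``$X$ is an abelian variety'' to ``$\Omega_X$ is strongly semiample and $\mathrm{kod}(X)=0$'' is immediate, since on an abelian variety $\Omega_X\cong\mathcal{O}_X^{\oplus\dim X}$ is globally generated and $K_X=\mathcal{O}_X$; so the content lies in the converse. Assume $\Sym^m\Omega_X$ is globally generated for some $m>0$ and $\mathrm{kod}(X)=0$, and set $n=\dim X$; we may assume $n\geq 2$, the case $n=1$ being immediate (a curve with torsion canonical bundle has genus $1$). The plan is to proceed in four steps: (i) show $K_X$ is torsion; (ii) upgrade ``globally generated'' to ``$\Sym^m\Omega_X$ is trivial'', and deduce that $\Omega_X$ is numerically flat; (iii) produce a finite \'etale cover of $X$ which is an abelian variety; (iv) use the triviality of $\Sym^m\Omega_X$ to descend and conclude that $X$ is itself an abelian variety.

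For (i), taking determinants gives $\det\Sym^m\Omega_X=K_X^{\otimes c}$ with $c=\tfrac{m}{n}\binom{m+n-1}{n-1}>0$. A globally generated line bundle is effective, while $\mathrm{kod}(X)=0$ forces $h^0(X,K_X^{\otimes c})\leq 1$; hence $K_X^{\otimes c}$ has a nowhere-vanishing section, i.e.\ $K_X^{\otimes c}\cong\mathcal{O}_X$, so $K_X$ is torsion and $c_1(\Omega_X)=0$ in $H^2(X,\QQ)$. For (ii), the globally generated bundle $F:=\Sym^m\Omega_X$, of rank $r$, defines a morphism $\varphi\colon X\to\mathrm{Gr}(h^0(X,F),r)$ with $\varphi^*\mathcal{O}_{\mathrm{Gr}}(1)=\det F\cong\mathcal{O}_X$; composing with the Pl\"ucker embedding gives a morphism to a projective space that pulls $\mathcal{O}(1)$ back to $\mathcal{O}_X$, hence is constant, so $\varphi$ is constant and $\Sym^m\Omega_X\cong\mathcal{O}_X^{\oplus r}$. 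Being globally generated, $\Sym^m\Omega_X$ makes $\Omega_X$ a nef bundle, and together with $c_1(\Omega_X)=0$ this means $\Omega_X$ is numerically flat in the sense of Demailly--Peternell--Schneider; in particular all Chern classes of $\Omega_X$ vanish rationally, so $c_1(X)=c_2(X)=0$.

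For (iii), let $d$ be the exact order of $K_X$ in $\pic(X)$ and $\pi\colon\widetilde X\to X$ the associated cyclic index-one cover; since $K_X$ is torsion this cover is \'etale, with $\widetilde X$ smooth, connected, projective, $K_{\widetilde X}\cong\mathcal{O}_{\widetilde X}$, and $c_2(\widetilde X)=\pi^*c_2(X)=0$. By Yau's theorem $\widetilde X$ carries a Ricci-flat K\"ahler metric $\omega$, and for such a metric $\int_{\widetilde X}c_2(\widetilde X)\wedge\omega^{n-2}$ is a nonnegative multiple of the $L^2$-norm of the curvature tensor, vanishing exactly when the metric is flat; since $c_2(\widetilde X)=0$ the metric is flat, hence $\widetilde X$ is a complex torus and therefore (being projective) an abelian variety. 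Alternatively, by the Beauville--Bogomolov decomposition $\widetilde X$ is, up to a further \'etale cover, a product of a torus with Calabi--Yau and hyperk\"ahler factors, and $c_2=0$ kills all but the torus factor.

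For (iv), replacing $\widetilde X$ by the Galois closure of $\widetilde X\to X$ if necessary --- still a finite \'etale cover of an abelian variety, hence an abelian variety $A$ --- we may write $X=A/G$ with $G$ finite acting freely by automorphisms of $A$, each $g\in G$ acting as $x\mapsto\rho(g)x+t_g$ with linear part $\rho(g)$. The equivariant bundle on $A$ corresponding to $\Sym^m\Omega_X$ is $\mathcal{O}_A^{\oplus r}$ with $G$-action given by $\Sym^m\rho$ (up to a dualization); by (ii) this representation is trivial, so, as $\rho(g)$ is semisimple of finite order and its $m$-th symmetric power is trivial, $\rho(g)$ must be a scalar $\lambda_g\,\mathrm{Id}$ with $\lambda_g^m=1$. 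If some $\lambda_g\neq 1$, then $(1-\lambda_g)\mathrm{Id}$ descends to a surjective isogeny of $A$, so $g$ has a fixed point on $A$, contradicting freeness; hence every $\rho(g)=\mathrm{Id}$, meaning $G$ acts by translations, and $X=A/G$ is an abelian variety. The main obstacle is the passage (ii)$\Rightarrow$(iii): converting the algebraic input into the transcendental fact that a finite cover is a torus, which is precisely where Yau's theorem (equivalently the Beauville--Bogomolov decomposition) enters; it is equally essential that one retains the \emph{full} triviality of $\Sym^m\Omega_X$ rather than merely numerical flatness of $\Omega_X$, since the latter is already satisfied by \'etale quotients of abelian varieties that are not abelian (e.g.\ bielliptic surfaces), and it is exactly the triviality of $\Sym^m\Omega_X$ that forces the deck transformations in (iv) to be translations.
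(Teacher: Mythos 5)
Your proof is correct, and its second half --- descending from the abelian cover $A$ by showing that triviality of the $G$-action on $\Sym^m H^0(A,\Omega_A)$ forces each linear part $\rho(g)$ to be a scalar, and then, by freeness of the action, the identity --- is exactly the argument the paper gives for Theorem \ref{smooth}, of which it explicitly declares the present statement to be a corollary. Where you differ is in how the abelian cover is produced: you extract it unconditionally from $\Sym^m\Omega_X\cong\mathcal{O}_X^{\oplus r}$ via $c_1=c_2=0$, Yau's theorem, and flatness of the Ricci-flat metric (equivalently Beauville--Bogomolov), whereas the mechanism this paper uses in the analogous asymptotically-generically-generated setting is the Greb--Kebekus--Peternell theorem (Theorem \ref{gkpthm}) applied to a minimal model, which requires the MMP and abundance; for the strongly semiample statement at hand the paper simply cites \cite{mistrurbi}. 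Your transcendental route buys a self-contained and unconditional proof of this particular theorem, at the cost of not extending to the merely generically generated case, where $\Sym^m\Omega_X$ is no longer trivial and one only controls intersections such as $c_2(\Omega_{\widetilde{Y}})\cdot H_1\cdots H_{n-2}$ on a minimal model --- which is precisely why the paper routes the general case through Theorem \ref{gkpthm}. Your closing observation that full triviality of $\Sym^m\Omega_X$ (not mere numerical flatness) is what kills the bielliptic-type quotients is also the correct and essential point.
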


\begin{theorem}

Let $X$ be a smooth projective surface. Then $X$ is birational to an abelian variety if and only if 
the cotangent bundle of $X$ is asymptotically generically generated and $X$ has Kodaira dimension $0$.

\end{theorem}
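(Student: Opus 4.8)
The plan is to carry out the two-step strategy announced in the introduction.

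\emph{Step 1 (set-up).} Since $\mathrm{kod}(X)=0$ and $Y$ is a minimal model of $X$, the divisor $K_Y$ is nef; the abundance conjecture on $Y$ makes $K_Y$ semiample, and as $\mathrm{kod}(Y,K_Y)=\mathrm{kod}(X)=0$ this forces $K_Y$ to be torsion in $\mathrm{Pic}(Y)$. Thus $Y$ is a projective klt variety with $K_Y$ numerically trivial. Holomorphic symmetric differentials are birational invariants of smooth projective varieties (a blow-up has centre of codimension $\geq 2$, and sections of a locally free sheaf extend over such a locus), so on a resolution $\widetilde{Y}\to Y$ the bundle $\Omega_{\widetilde{Y}}$ is again AGG; equivalently, the reflexive symmetric powers of $\Omega_Y$ are generically globally generated on $Y_{\mathrm{reg}}$. (If $\dim X=1$ there is nothing to prove, so assume $n:=\dim X\geq 2$.)

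\emph{Step 2 ($Y$ is a torus quotient).} The information gathered --- $Y$ klt, $K_Y\equiv 0$, and the generic positivity of the reflexive cotangent sheaf provided by the AGG hypothesis --- is exactly what is needed to invoke the criterion of Greb--Kebekus--Peternell \cite{GKP}: $Y\cong A/G$ for an abelian variety $A$ and a finite group $G$ acting on $A$ freely in codimension one. (Alternatively one may argue through the singular Beauville--Bogomolov decomposition: after a finite quasi-\'etale cover $Y$ becomes a product of a torus with irreducible Calabi--Yau and holomorphic symplectic factors, but such factors carry no nonzero symmetric differential of positive degree --- this follows from stability of their cotangent sheaves together with $c_1=0$ --- and hence would obstruct generic generation of every $\Sym^m\Omega_{\widetilde{Y}}$, so they cannot occur.) In particular $X$ is birational to $A/G$.

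\emph{Step 3 ($A/G$ is abelian).} Write each $g\in G$ as a translation followed by a group automorphism, and let $\bar{G}\subseteq\mathrm{GL}(W)$ be the image of the induced linear representation on $W:=H^0(A,\Omega_A)$; it suffices to prove $\bar{G}=\{1\}$. Resolving $G$-equivariantly the closure of the graph of the degree-$|G|$ rational map $A\dashrightarrow X$ yields a smooth projective variety $Z$, birational to $A$, with a $G$-action and a $G$-invariant generically finite degree-$|G|$ morphism $Z\to X$; pulling symmetric differentials back along it gives $H^0(X,\Sym^m\Omega_X)\hookrightarrow H^0(Z,\Sym^m\Omega_Z)=H^0(A,\Sym^m\Omega_A)=\Sym^m W$, with image contained in $(\Sym^m W)^{\bar{G}}$. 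If $\Sym^{m_0}\Omega_X$ is generically generated then $h^0(X,\Sym^{m_0}\Omega_X)\geq\mathrm{rk}\,\Sym^{m_0}\Omega_X=\dim\Sym^{m_0}W$, so comparing with the inclusion above forces $H^0(X,\Sym^{m_0}\Omega_X)=(\Sym^{m_0}W)^{\bar{G}}=\Sym^{m_0}W$; hence $\bar{G}$ acts trivially on $\Sym^{m_0}W$, which (elementary linear algebra) is possible only if each element of $\bar{G}$ acts on $W$ as scalar multiplication by an $m_0$-th root of unity. Thus $\bar{G}$ is a cyclic group of scalars, and every stabilizer in $G$ of a point of $A$ likewise has scalar linear part, so every quotient singularity of $Y$ is of type $\tfrac1k(1,\dots,1)$ for some $k\mid m_0$. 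Suppose $\bar{G}\neq\{1\}$: pick $\sigma=\tau_t\circ\alpha\in G$ with $\alpha$ a nontrivial scalar automorphism; then $\mathrm{id}_A-\alpha$ has invertible differential at $0$, hence is an isogeny, so $\sigma$ has a fixed point on $A$, and near its image $Y$ has a singularity of type $\tfrac1k(1,\dots,1)$ with $k\geq 2$, whose resolution is the total space of $\mathcal{O}_{\mathbb{P}^{n-1}}(-k)$. A local computation in the blow-up chart $z_1=s$, $z_j=sw_j$, with $\mu_k$-invariant coordinate $\tau=s^k$, gives $(dz_1)^{m_0}=c\,\tau^{\,m_0/k-m_0}(d\tau)^{m_0}$, which has a pole along the exceptional divisor; since a symmetric differential regular on one resolution is regular on every resolution, $(dz_1)^{m_0}\in\Sym^{m_0}W$ does not extend to $X$, contradicting $H^0(X,\Sym^{m_0}\Omega_X)=\Sym^{m_0}W$. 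Therefore $\bar{G}=\{1\}$, $G$ acts by translations, $Y=A/G$ is an abelian variety, and $X$ is birational to it.

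\emph{Main obstacle.} The delicate point is Step 3. Because AGG is only a generic condition, one cannot reason fibrewise in a naive way; the argument closes only because symmetric differentials on an abelian variety are rigid --- every global section is translation-invariant, so evaluation at a general point is already an isomorphism onto $\Sym^{m_0}W$ --- which pins $h^0$ at its maximal value and thereby upgrades a purely local pole computation at the cyclic quotient singularities of $A/G$ into a genuine contradiction. The supporting bookkeeping ($G$-equivariant common resolutions of $A$, $X$ and the fibre product, and the invariance of ``does a symmetric differential extend'' under change of resolution) is the main technical overhead; the other point that needs care is translating the AGG hypothesis into exactly the cotangent-positivity input that the Greb--Kebekus--Peternell criterion requires.
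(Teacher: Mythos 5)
Your argument is, in substance, the proof this paper gives of the general Theorem \ref{mainthnm}, specialized to surfaces (where the minimal model program and abundance are theorems); the paper itself only quotes the surface statement from [MU17] without reproving it here. Your Steps 1 and 3 are sound and coincide with the paper's Theorem \ref{smooth} and Theorem \ref{sing}: the counting argument $h^0 \geqslant \mathrm{rk}$ forcing $\bar{G}$ to act on $W$ by scalar $m_0$-th roots of unity, the fixed-point argument via the isogeny $\mathrm{id}_A-\alpha$, and the pole computation $(dz_1)^{m_0}=c\,\tau^{m_0/k-m_0}(d\tau)^{m_0}$ on the blow-up of a $\tfrac1k(1,\dots,1)$ point are exactly the paper's.

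The one genuine soft spot is Step 2. The Greb--Kebekus--Peternell criterion does not take ``generic positivity of the cotangent sheaf'' as input: its hypothesis is the numerical vanishing $c_2(\Omega_{\widetilde{Y}})\cdot H_1\cdots H_{n-2}=0$ on a resolution, and the passage from ``$\Omega$ is AGG and $K_Y$ is torsion'' to this vanishing is precisely the content of the Proposition of Section 3 of the paper; you assert it rather than prove it. For surfaces the gap closes in a few lines because the minimal model $Y$ is smooth: choosing $r=\mathrm{rk}\,\Sym^k\Omega_Y$ general sections gives a generically bijective map $\mathcal{O}_Y^{\oplus r}\to\Sym^k\Omega_Y$ whose determinant is a nonzero section of the torsion line bundle $K_Y^{\otimes N}$; hence $K_Y^{\otimes N}\cong\mathcal{O}_Y$, that section is a nonvanishing constant, the map is an isomorphism, $\Sym^k\Omega_Y$ is trivial, and $c_2(Y)=0$, so GKP applies. (Equivalently, and closer to your parenthetical alternative, for surfaces one can bypass GKP entirely: by the Enriques classification a smooth minimal surface with $K\equiv 0$ is abelian, bielliptic, K3 or Enriques, and the last two carry no nonzero symmetric differentials by Kobayashi's theorem, so only \'etale torus quotients survive and your Step 3 finishes.) With that supplied, your proof is complete and follows the paper's route.
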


Those theorems lead to ask whether the following holds:

\begin{question}
Let $X$ be a smooth projective variety of Kodaira dimension $0$. Suppose that the cotangent bundle 
$\Omega_X$ is asymptotically globally generated, is $X$ birational to an abelian variety?

\end{question}

The main result in this work is an affirmative answer to this question, 
at least in the case where Minimal Model Program and Abundance Conjecture can be applied.

In order to prove these results, we apply our previous results in order to show that 
the theorem holds in the case a minimal model of $X$ is smooth,
then we show that within the hypotheses a singular minimal model cannot occur.

\section{Quotients of Abelian Varieties}

In this section we show that a smooth projective variety of Kodaira dimension $0$ with asymptotically generically generated cotangent bundle is birational to a quotient of an abelian variety.


We will use the following theorem to show that a minimal model of the variety we are dealing with is a quotient of an abelian variety:

\begin{theorem}[Greb-Kebekus-Peternell, \cite{GKP}]
\label{gkpthm}
Let $Y$ be a normal, complex, projective variety of dimension $n$, 
with at worst KLT
singularities. 
Assume that $Y$ is smooth in codimension 2, 
and assume that the canonical
divisor of $Y$ is numerically trivial, $K_Y \equiv 0$. 
Further, assume that there exist ample
divisors $H_1, \dots , H_{n-2}$ on $Y$ and a desingularization
$\pi \colon \widetilde{Y} \to Y$ such that 
$c_2(\Omega_{\widetilde{Y}}).H_1 \dots H_{n-2} = 0$. 
Then, there exist an abelian variety $A$ and a finite, surjective,
Galois morphism $A \to Y$ that is \'etale in codimension 2.

\end{theorem}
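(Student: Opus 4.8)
The plan is to study the reflexive cotangent sheaf $\Omega_Y^{[1]}$ of $Y$ (equivalently its dual, the tangent sheaf $\mathcal{T}_Y$) and to show that the two hypotheses $K_Y \equiv 0$ and $c_2(\Omega_{\widetilde{Y}}) \cdot H_1 \cdots H_{n-2} = 0$ together force this sheaf to be \emph{flat}; the desired covering by an abelian variety will then arise from the associated flat structure. Because $Y$ is smooth in codimension $2$, its singular locus has codimension at least $3$, so $\Omega_Y^{[1]}$ coincides with the genuine cotangent bundle outside a set of codimension $\geq 3$, and reflexive differential forms may be pulled up and down along the resolution $\pi \colon \widetilde{Y} \to Y$ by the extension theorem for differential forms on KLT spaces (Greb–Kebekus–Kovács–Peternell); this is what lets us compare $c_2(\Omega_{\widetilde{Y}})$ with the reflexive second Chern class on $Y$. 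Fixing the polarization $(H_1, \dots, H_{n-2})$, the first step is to prove that $\mathcal{T}_Y$ is semistable with respect to it. Here I would use that $K_Y \equiv 0$ gives $c_1(\mathcal{T}_Y) \equiv 0$, so any destabilizing subsheaf would have strictly positive slope; being saturated it would define a foliation of positive slope, and the results on foliations on KLT spaces with numerically trivial canonical class (algebraic integrability and subadditivity of Kodaira dimension) rule this out, yielding semistability.

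The second step is the Bogomolov–Gieseker inequality in the singular setting. For an $(H_1, \dots, H_{n-2})$-semistable reflexive sheaf $\mathcal{E}$ of rank $r$ on $Y$ one has
\[
\bigl( 2r\, c_2(\mathcal{E}) - (r-1)\, c_1(\mathcal{E})^2 \bigr) \cdot H_1 \cdots H_{n-2} \;\geq\; 0 .
\]
Applying this to $\mathcal{E} = \Omega_Y^{[1]}$ and using $c_1 \equiv 0$, the left-hand side collapses to a positive multiple of $c_2(\Omega_Y^{[1]}) \cdot H_1 \cdots H_{n-2}$, which by hypothesis (after comparison with $c_2(\Omega_{\widetilde{Y}})$ via $\pi$) equals zero. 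Hence equality holds in the Bogomolov–Gieseker inequality. The third step is to exploit this equality: a semistable reflexive sheaf realizing equality carries a projectively flat Hermitian–Einstein structure over the smooth locus $Y_{\mathrm{reg}}$ (the singular analogue of the Uhlenbeck–Yau and Bando–Siu correspondence together with its characterization of the equality case), and since in addition $c_1 \equiv 0$ this structure is genuinely flat. Thus $\Omega_Y^{[1]}|_{Y_{\mathrm{reg}}}$ is given by a unitary representation of $\pi_1(Y_{\mathrm{reg}})$; equivalently, $\mathcal{T}_Y$ is numerically flat, and by the structure theory of numerically flat bundles (Demailly–Peternell–Schneider) it is a successive extension of Hermitian flat bundles.

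It remains to pass from a flat tangent sheaf with $K_Y \equiv 0$ to the uniformization. This is the singular Ricci-flat analogue of Yau's theorem: the flat connection trivializes the holonomy up to a finite group, so the universal cover of $Y_{\mathrm{reg}}$ develops into an open subset of $\cc^n$ on which the deck transformations act by affine maps, and using that $\pi_1(Y_{\mathrm{reg}})$ controls the étale fundamental group of $Y$ in this regularity range (KLT spaces being smooth in codimension $2$), one produces an abelian variety $A$ together with a finite surjective Galois morphism $A \to Y$ branched only over the singular locus, hence étale in codimension $2$. I expect the principal obstacle to be precisely the singular Bogomolov–Gieseker inequality and its equality case: making sense of the Chern classes, the polarized semistability, and the Hermitian–Einstein metric on the KLT space $Y$ — proving the inequality by restricting to complete-intersection surfaces cut out by the $H_i$ and reducing to the smooth surface case, and then extracting honest flatness from equality — requires the full apparatus of reflexive sheaves and stability on singular spaces, and it is here that the smoothness-in-codimension-$2$ hypothesis and the KLT extension theorems for differential forms do the decisive work.
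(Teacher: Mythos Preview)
The paper does not prove this theorem at all: it is quoted verbatim from Greb--Kebekus--Peternell \cite{GKP} and used as a black box (see the attribution in the theorem heading and the absence of any proof environment following it). There is therefore no ``paper's own proof'' to compare your proposal against.

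For what it is worth, your outline is a reasonable summary of the strategy actually pursued in \cite{GKP}: polystability of $\mathcal{T}_Y$ with respect to the given polarization, the singular Bogomolov--Gieseker inequality, the equality case forcing numerical flatness, and then a uniformization argument producing the abelian cover. Your identification of the main technical difficulty --- making the Bogomolov--Gieseker equality case and the resulting flatness statement rigorous on a KLT space that is only smooth in codimension $2$ --- is also accurate; in \cite{GKP} this is handled via restriction to general complete-intersection surfaces and the theory of flat sheaves on KLT spaces developed there. But none of this appears in the present paper, which simply invokes the result.
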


\begin{rmk}
We remark that by Kawamata \cite{kawamata} the condition that $K_Y \equiv 0$ is equivalent to the fact 
that $K_Y$ is torsion (therefore $K_Y$ is semiample and $Y$ has Kodaira dimension $0$).
Abundance conjecture predicts that if $K_Y$ is nef then it is semiample,
which for Kodaira dimension $0$ varieties $Y$ is the same as the equivalence between ``$K_Y$ nef'' and ``$K_Y \equiv 0$''.
\end{rmk}

\begin{remark}

For a smooth variety, the property of having an asymptotically generically generated cotangent bundle is a birationally invariant property,
contrary to having a strongly semiample cotangent bundle.

In fact, if the cotangent bundle $\Omega_X$ on a variety $X$ is globally gnerated, then the cotangent 
bundle on the blow up $\widetilde{X}$ of $X$ in a point is generically generated, but global sections do not generate it on the exceptional divisor,
and the same holds for its symmetric powers.

However, as the space of global sections $H^0(X,\Sym^k \Omega_X)$ of a symmetric power (or any tensor power) of the cotangent bundle 
is invariant by birational morphisms of smooth varieties,  it is immediate to check 
that also asymptotic generic generation of the cotangent bundle is a birational invariant of smooth projective varieties.

\end{remark}

Let us use the  result above in order to prove that the variety $X$ is birational to a quotient of an Abelian variety.

\begin{prop}

Let $X$ be a smooth projective variety with $kod(X)=0$. 
Suppose that the cotangent bundle $\Omega_X$ is asymptotically generically generated, 
and that $X$ admits a minimal model $Y$ that satisfies abundance conjecture 
(\emph{i.e.} $Y$ has terminal singularities and  $K_{Y}$ is numerically trivial).
Then, there exists an abelian variety $A$ and a finite, surjective,
Galois morphism $A \to Y$ that is \'etale in codimension 2.
In particular $X$ is birational to a quotient of an abelian variety by a finite group,
and the quotient map is \`etale in codimension $2$.

\end{prop}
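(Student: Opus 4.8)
The plan is to verify the hypotheses of Theorem~\ref{gkpthm} for the minimal model $Y$, so that the conclusion follows at once. First I would observe that since $Y$ is a minimal model of $X$ with $\mathrm{kod}(X)=0$, and we assume terminality and $K_Y \equiv 0$, the pair $(Y, 0)$ is KLT; moreover terminal singularities are smooth in codimension $2$ (indeed terminal surface singularities are smooth, and the non-terminal locus has codimension at least $3$... more precisely terminal singularities are smooth in codimension $2$), so the first structural hypotheses of Theorem~\ref{gkpthm} hold. The remaining and crucial hypothesis is the existence of ample divisors $H_1, \dots, H_{n-2}$ on $Y$ and a resolution $\pi \colon \widetilde{Y} \to Y$ with $c_2(\Omega_{\widetilde{Y}}) \cdot H_1 \cdots H_{n-2} = 0$.

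To produce this vanishing I would exploit the asymptotic generic generation of $\Omega_X$, which by the Remark above is a birational invariant among smooth projective varieties; hence $\Omega_{\widetilde{Y}}$ is asymptotically generically generated for any smooth birational model $\widetilde{Y}$, in particular for a resolution of $Y$. The idea is then: a symmetric power $\Sym^m \Omega_{\widetilde{Y}}$ is globally generated over a dense open set $U$, giving a generically surjective evaluation $H^0(\widetilde{Y}, \Sym^m \Omega_{\widetilde{Y}}) \otimes \oo_{\widetilde{Y}} \twoheadrightarrow \Sym^m \Omega_{\widetilde{Y}}$ over $U$. One wants to conclude that $\Sym^m \Omega_{\widetilde{Y}}$ is a ``weakly positive'' (pseudo-effective, in the sense of sheaves) bundle, so its Chern classes are pseudo-effective; combined with the fact that $K_{\widetilde{Y}} = \pi^* K_Y + (\text{effective exceptional})$ has Kodaira dimension $0$, hence $c_1(\Omega_{\widetilde{Y}})$ is (up to the exceptional part) numerically as small as possible, a Bogomolov-type inequality or a direct intersection-theoretic argument on $Y$ forces $c_2 \cdot H_1 \cdots H_{n-2} \geqslant 0$ and $c_1^2$-type expressions to vanish against ample classes pulled back from $Y$. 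The cleanest route is: push down to $Y$, where $K_Y \equiv 0$, choose the $H_i$ ample on $Y$, and show that $\pi^* H_1 \cdots \pi^* H_{n-2} \cdot c_2(\Omega_{\widetilde{Y}})$ is simultaneously $\geqslant 0$ (by a semipositivity/pseudo-effectivity statement for Chern classes of generically globally generated — hence generically nef — bundles, e.g. via a Mehta–Ramanathan type restriction to a curve where the bundle is an extension of globally generated pieces) and $\leqslant 0$ (from $K_Y \equiv 0$ together with the Miyaoka–Yau / Bogomolov inequality, or directly from the fact that $c_2$ of a bundle admitting enough sections on a surface slice is controlled). This pins the intersection number to $0$, which is exactly the GKP hypothesis.

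The main obstacle, as I see it, is precisely this Chern class computation: making rigorous that asymptotic generic generation of $\Omega_{\widetilde{Y}}$, together with $\mathrm{kod} = 0$ and $K_Y \equiv 0$, forces $c_2(\Omega_{\widetilde{Y}}) \cdot H_1 \cdots H_{n-2} = 0$. The subtlety is that generic generation only controls the bundle on a dense open set, so one must either work with the saturated image sheaf, or pass to a curve $C = H_1 \cap \dots \cap H_{n-2}$ (a general complete intersection of ample divisors, so $C$ avoids the finitely many bad points and lies in the generic-generation locus by a Bertini-type argument), restrict $\Sym^m \Omega_{\widetilde{Y}}$ there, and use that a globally generated bundle on a curve has nonnegative degree, combined with $\deg(K_{\widetilde{Y}}|_C) = \pi^* K_Y \cdot H_1 \cdots H_{n-2} = 0$ plus the exceptional correction, to extract the degree of $c_2$ against $C$. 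I would carry out the argument in this order: (1) check KLT and smoothness in codimension $2$ for $Y$; (2) fix a resolution $\pi \colon \widetilde{Y} \to Y$ and transfer asymptotic generic generation to $\Omega_{\widetilde{Y}}$; (3) choose $H_1, \dots, H_{n-2}$ ample on $Y$, form the general complete-intersection curve, and run the restriction/degree argument to get $c_2(\Omega_{\widetilde{Y}}) \cdot \pi^* H_1 \cdots \pi^* H_{n-2} = 0$; (4) invoke Theorem~\ref{gkpthm} to obtain the abelian variety $A$ and the Galois cover $A \to Y$ étale in codimension $2$; (5) conclude that $X$, being birational to $Y$, is birational to a quotient of an abelian variety with the stated étale-in-codimension-$2$ quotient map.
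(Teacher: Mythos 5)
Your overall skeleton agrees with the paper's: check that $Y$ is KLT and smooth in codimension $2$ (terminal singularities have codimension at least $3$), transfer asymptotic generic generation to a resolution $\widetilde{Y}$ by birational invariance, establish the Chern class vanishing against ample divisors $H_1,\dots,H_{n-2}$ on $Y$, and invoke Theorem \ref{gkpthm}. However, the step you yourself flag as the main obstacle is exactly where your sketch has genuine gaps. First, $H_1\cap\dots\cap H_{n-2}$ is a \emph{surface}, not a curve (it must be, for $c_2$ to pair with it), and your plan to make it avoid the non-generated locus ``by a Bertini-type argument'' because there are only ``finitely many bad points'' does not work: a priori the base locus of $\Sym^k\Omega_{\widetilde{Y}}$ is a divisor, which no general complete intersection of ample classes can avoid. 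The paper's key observation here is that this base locus is contained in the zero divisor of a section of $\det\Sym^k\Omega_{\widetilde{Y}}$, i.e.\ in the support of a pluricanonical divisor; since $\mathrm{kod}=0$ and $Y$ is terminal, that divisor does not move and is supported on $\mathrm{Exc}(\pi)$, which lies over $Y^{\mathrm{sing}}$ of codimension at least $3$, so the complete intersection surface can be chosen to miss it. Without this argument you cannot place the surface inside the locus of generation.

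Second, your proposed two-sided bound on $c_2$ is not viable as stated: the Miyaoka--Yau/Bogomolov-type inequalities give \emph{lower} bounds on $c_2$ against ample classes for minimal varieties, not the upper bound $c_2\cdot H_1\cdots H_{n-2}\leqslant 0$ you need, so the ``$\leqslant 0$'' half of your pincer has no source. The paper avoids this entirely with a much cleaner mechanism: on the complete intersection surface the restriction of $\Sym^k\Omega_{\widetilde{Y}}$ is globally generated and has trivial determinant (a multiple of $K_{\widetilde{Y}}$, trivial there), and a globally generated vector bundle with trivial determinant is trivial; hence \emph{all} its Chern classes vanish on the surface, and one recovers $c_1(\Omega_{\widetilde{Y}})\cdot H_1\cdots H_{n-2}=0$ and $c_2(\Omega_{\widetilde{Y}})\cdot H_1\cdots H_{n-2}=0$ from the standard expressions of $c_1$ and $c_2$ of a symmetric power in terms of $c_1$ and $c_2$ of the bundle. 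You should replace your pseudo-effectivity/Bogomolov pincer with this triviality argument, and supply the pluricanonical-divisor argument locating the base locus inside $\mathrm{Exc}(\pi)$.
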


\begin{proof}

Consider a minimal model $Y \sim_{bir} X$ such that 
$K_Y$ is numerically trivial. 
Choose a resolution of singularities 
$\pi \colon \widetilde{Y} \to Y$,
let us show that, for $H_1, \dots, H_{2n-2}$ ample divisors on $Y$, 
we have 
$c_2(\Omega_{\widetilde{Y}}).H_1 \dots H_{n-2} = 0$. 
As $Y$ has terminal singularities, $Y^{sing}$ has codimension at least $3$ in $Y$,
so we can suppose that 
$H_1 \cap \dots \cap H_{n-2} \cap Y^{sing} = \emptyset$.
As $\widetilde{Y}$ is smooth and birational to $X$ then 
$\Sym^k \Omega_{\widetilde{Y}}$ is generically generated for some $k>0$.

Now let us consider the base locus of $\Sym^k \Omega_{\widetilde{Y}}$:
this is a divisor in $\widetilde{Y}$, linearly equivalent to a multiple of the canonical divisor $K_{\widetilde{Y}}$. As the Kodaira dimension of 
$\widetilde{Y}$ is $0$ and $Y$ is terminal, then any multiple of the canonical divisor does not move and its support is contained in the exceptional locus of $\pi$. 
So
$\mathrm{Bs}(\Sym^k \Omega_{\widetilde{Y}}) \subset Exc(\pi)$.
In particular 
$(\Sym^k \Omega_{\widetilde{Y}})_{|H_1 \cap \dots \cap H_{n-2}}$
is generated by its global sections, its determinant is trivial, and therefore it is trivial.
Then $c_1(\Sym^k \Omega_{\widetilde{Y}}). H_1 \dots  H_{n-2} =0$
and $c_2(\Sym^k \Omega_{\widetilde{Y}}).H_1 \dots  H_{n-2} =0$.

\begin{sloppypar}
As $c_1(\Sym^k \Omega_{\widetilde{Y}})$ is a multiple of $c_1(\Omega_{\widetilde{Y}})$, and 
$c_2(\Sym^k \Omega_{\widetilde{Y}})$ is a combination of 
$c_1(\Omega_{\widetilde{Y}})^2$ and $c_2(\Omega_{\widetilde{Y}})$,
this implies that $c_1(\Omega_{\widetilde{Y}}).H_1 \dots H_{n-2} =0$
and 
\hbox{$c_2(\Omega_{\widetilde{Y}}).H_1  \dots  H_{n-2} =0$,}
and we can apply  Proposition \ref{gkpthm}.
\end{sloppypar}

\end{proof}

So we have shown that, for a Kodaira dimension $0$ variety $X$
(satisfying the MMP conjectures), 
if the cotangent is asymptotically generically generated
then a model of $X$ is a quotient of an abelian variety.
Let us distinguish two cases according to this quotient being 
smooth or singular.

\section{The smooth case}

In this section we show that a smooth projective variety $X$ with Kodaira dimension $0$ and asymptotically generically generated cotangent bundle is birational to an abelian variety if a minimal model of $X$ is smooth.

The results of this section make use of very similar constructions 
to some of the ones in \cite{mistrurbi}, 
we will recall anyway the constructions needed.

We have proven above that a smooth projective variety $X$ with Kodaira dimension $0$ (satisfying the MMP conjectures) and asymptotically generically generated cotangent bundle $\Omega_X$
has a minimal model $Y$ which is a quotient of an abelian variety.

Let us show that if the minimal model $Y$ is smooth 
then $Y$ is an abelian variety itself:

\begin{thm}
\label{smooth}

Let $X$ be a smooth projective variety, suppose that the cotangent bundle $\Omega_X$ is asymptotically generated by global sections, and that $X$ is 
birational to a smooth quotient $Y = A/G$ of an abelian variety by a finite group. 
Then $Y$ is an abelian variety.
	
\end{thm}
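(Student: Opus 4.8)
The plan is to exploit the fact that $Y = A/G$ is smooth. Let $q\colon A\to Y$ be the quotient map, which is finite and (by the previous Proposition, or directly) étale in codimension $2$. First I would argue that $G$ acts on $A$ without fixed points in codimension $1$, so that $q$ is quasi-étale; combined with smoothness of $Y$ and purity of the branch locus (Zariski--Nagata), I expect to conclude that $q$ is in fact étale everywhere. Indeed, if $q$ were ramified, the branch divisor would be a nonempty divisor in the smooth variety $Y$, contradicting étaleness in codimension $2$ (hence in codimension $1$). Therefore $Y$ is smooth with an étale cover by an abelian variety, so $\Omega_Y$ is the pullback-descent of $\Omega_A=\oo_A^{\oplus n}$; equivalently $q^*\Omega_Y\cong\oo_A^{\oplus n}$, and in particular $\Omega_Y$ is a flat (numerically trivial) vector bundle on $Y$, and $K_Y$ is torsion.

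The heart of the argument is then to show that asymptotic generic generation of $\Omega_Y$ (which we may use on $Y$ since $Y$ is smooth and AGG of the cotangent bundle is a birational invariant of smooth projective varieties, as recalled in the excerpt) forces $G$ to act by translations, i.e. $Y$ is itself abelian. Since $q^*\Sym^k\Omega_Y\cong\oo_A^{\oplus N}$ for every $k$, pulling back global sections gives $H^0(Y,\Sym^k\Omega_Y)\hookrightarrow H^0(A,\oo_A^{\oplus N})=\cc^N$, and the image is exactly the $G$-invariants; so $\Sym^k\Omega_Y$ is globally generated on $Y$ precisely when the $G$-invariant subspace of $\Sym^k(\cc^n)$ already generates $\oo_A^{\oplus N}$ $G$-equivariantly over a dense open set of $A$. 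More concretely: $G$ acts on the abelian variety $A$, hence on the tangent space $T_0A\cong\cc^n$ at a point, through a representation; writing $g\in G$ as $a\mapsto \rho(g)a + t_g$ with $\rho(g)\in\mathrm{GL}_n(\cc)$ finite order, the bundle $\Sym^k\Omega_Y$ corresponds to the $G$-equivariant trivial bundle on $A$ with fibrewise $G$-action through $\Sym^k\rho^\vee$. Its global sections are the $\rho$-invariants, and generic generation says these invariants span $\Sym^k(\cc^n)^\vee$ at the generic point after twisting by the cocycle — but since the bundle is literally trivial and the action is linear and constant, this forces $\Sym^k\rho$ to contain the trivial representation "enough times to span", which happens for some $k$ only if $\rho$ is trivial. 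Once $\rho(g)=\mathrm{id}$ for all $g$, every element of $G$ acts on $A$ as a translation, so $Y=A/G$ is an abelian variety.

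In more detail on that last reduction: the sheaf $q_*\oo_A$ decomposes under the $G$-action, and $\Omega_Y$ is globally generated after a symmetric power iff the coherent sheaf $(\Sym^k\Omega_A \otimes q_*\oo_A)^G$ surjects onto $\Sym^k\Omega_Y$ generically; unwinding, the obstruction to generic generation of $\Sym^k\Omega_Y$ is governed by the linear part $\rho\colon G\to\mathrm{GL}(T_0A)$, because $\Omega_A$ is the trivial bundle on which $G$ acts through $\rho^\vee$ and translations act trivially on forms. Thus $\Sym^k\Omega_Y$ is generically generated iff $\cc[V]^{\rho}_{\le}$— the degree-$\le k$ part of the invariant subalgebra, viewed inside functions on $V=T_0A$ — separates the generic point of $V/\rho$ from the origin direction in the appropriate sense; a clean way to phrase it is that generic generation for some $k$ is equivalent to the ring of $\rho$-invariant polynomials having positive-degree elements whose differentials span $V^\vee$ generically, which holds iff $\rho$ is trivial (a nontrivial finite linear group always has a nonzero vector fixed by no nontrivial invariant-differential combination — concretely, an eigenvector with nontrivial eigenvalue lies in the common zero locus of all positive-degree invariant differentials' relevant components).

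The main obstacle, and where I would spend the most care, is this last equivalence: turning "$\Sym^k\Omega_Y$ generically generated for some $k$" into the purely representation-theoretic statement "$\rho$ trivial", rigorously and without hand-waving about the cocycle $t_g$. I would try to bypass the cocycle entirely by passing to the subgroup $G_0=\ker\rho\subset G$: then $A\to A/G_0$ is a quotient by translations, so $A/G_0=:B$ is an abelian variety, $Y=B/(G/G_0)$, and $G/G_0$ now acts on $B$ with injective linear part; replacing $(A,G)$ by $(B,G/G_0)$ we reduce to the case where $\rho$ is faithful and must show $G$ is trivial. On $B$ abelian, $\Sym^k\Omega_B$ is genuinely trivial, $H^0(Y,\Sym^k\Omega_Y)=\Sym^k(\cc^n)^{\vee\,G}$, and the evaluation map at a point $b\in B$ with trivial stabilizer is identified with the inclusion of $G$-invariants into the whole space $\Sym^k(\cc^n)^\vee$; this is surjective for some $k$ iff $\Sym^\bullet(\cc^n)^{\vee\,G}=\Sym^\bullet(\cc^n)^\vee$, i.e. iff $G$ acts trivially, i.e. (faithfulness) $G=1$. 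That finishes the proof, with the one genuine input being the identification of the evaluation map with the invariant-inclusion away from the (codimension $\ge 1$, in fact we can arrange codimension $\ge 2$) branch locus — which is exactly where étaleness in codimension $2$ and smoothness of $Y$ are used.
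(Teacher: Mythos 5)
Your setup matches the paper's: the quotient $q\colon A\to Y$ is étale, so $H^0(Y,\Sym^k\Omega_Y)$ is identified with the $G$-invariants of the constant space $\Sym^k H^0(A,\Omega_A)$, and asymptotic generic generation forces the evaluation map at a general point --- i.e.\ the inclusion $(\Sym^k V^\vee)^G\subseteq \Sym^k V^\vee$, $V=T_0A$ --- to be surjective for some $k$. The gap is in your last step, where you assert that this surjectivity for \emph{some} $k$ is equivalent to $\Sym^\bullet(V^\vee)^G=\Sym^\bullet(V^\vee)$ and hence to $\rho$ being trivial. That equivalence is false as a statement about finite linear groups: if $\rho(g)$ acts on all of $V$ as multiplication by a primitive $r$-th root of unity, then $G$ acts trivially on $\Sym^r V^\vee$ while $\rho$ is faithful and nontrivial. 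So the representation theory alone only gives that $G$ acts on $H^0(A,\Omega_A)$ by homotheties (the paper extracts exactly this, via the computation $g\cdot(v^{k-1}w)=(\mu/\lambda)\,v^{k-1}w$ showing all eigenvalues of $\rho(g)$ coincide), and a second, geometric input is needed to kill the homotheties: an affine automorphism $x\mapsto\lambda x+\tau$ of an abelian variety with $\lambda\neq 1$ a root of unity always has a fixed point, namely $(1-\lambda)^{-1}\tau$ (the endomorphism $1-\lambda$ is an isogeny, hence surjective), so such an element cannot belong to $G$ when $Y=A/G$ is smooth (equivalently, when the action is free). You never return to the smoothness of $Y$ after establishing étaleness, so your argument cannot distinguish the smooth case from, say, a Kummer quotient, where $(\Sym^2V^\vee)^G=\Sym^2V^\vee$ holds even though $G$ is far from acting by translations. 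Your reduction to a faithful linear part via $B=A/\ker\rho$ does not circumvent this, since the scalar counterexample already has faithful linear part; the vague parenthetical about eigenvectors lying in the common zero locus of invariant differentials is also false for scalar actions.

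A secondary remark: your ``or directly'' for the claim that smoothness of $A/G$ forces $q$ to be étale is not justified --- $E/\{\pm1\}\cong\mathbb{P}^1$ is a smooth quotient of an elliptic curve with non-étale quotient map --- so you do genuinely need either the codimension-two étaleness supplied by the preceding proposition together with purity of the branch locus, or some other argument. With that caveat, and with the homothety/fixed-point step restored, your proof closes up and coincides with the paper's.
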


\begin{proof}

In order to prove that $Y = A/ G$ is an abelian variety we will prove that $G$ acts on $A$ by translations,
\emph{i.e.} we will prove that the action of $G$ on $H^0(A, \Omega_A)$ is trivial.

Since the quotient $f \colon A \to A/G = Y$ is smooth then $f$ is an étale map.
Therefore $f^* \Omega_Y \cong \Omega_A$,
and  $H^0(Y, \Sym^m \Omega_Y) \cong H^0(A, \Sym^m \Omega_A)^G$ 
for all $m>0$.
As $Y$ is a smooth variety birational to $X$,
then $\Omega_Y$ is asymptotically generically generated,
therefore $\dim H^0(Y, \Sym^k \Omega_Y) \geqslant \rk (\Sym^k \Omega_Y)$,  for some $k>0$.
So the inclusion
\[
H^0(Y, \Sym^k \Omega_Y) \cong H^0(A, \Sym^k \Omega_A)^G
\subseteq H^0(A, \Sym^k \Omega_A)
\]
must be an equivalence, as 
$\dim H^0(A, \Sym^k \Omega_A) = \rk \Sym^k \Omega_A$ for all $k>0$.

So the action 
of $G$ on $H^0(A, \Sym^k \Omega_A)$ is trivial,
and this can happen if and only if $G$ acts on $H^0(A, \Omega_A)$
through homothethies 
(\emph{i.e.} multiplication by roots of $1$):
in fact as $G$ is a finite group, then the action of an element $g \in G$
on $H^0(A, \Omega)$
is diagonalizable.
Given two eigenvectors $v,w \in H^0(A, \Omega_A)$, with eigenvalues $\lambda$ and $\mu$,
choosing $k>0$ such that $\lambda^k = \mu^k = 1$ and that $G$ acts trivially on 
$H^0(A, \Sym^k \Omega_A) = \Sym^k H^0(A, \Omega_A)$, we have
\[
g\cdot (v^{k-1}.w) = \lambda^{k-1}v^{k-1}.\mu w = 
(\frac{\mu}{\lambda})v^{k-1}.w = v^{k-1}.w
\]
so $\lambda = \mu$.

But in this case  the quotient will have singular points unless 
the action of $G$ is trivial on $H^0(A, \Omega_A)$:
if the action of an element $g\in G$ on 
$H^0(X, \Omega_A)$ is given by multiplication by $\lambda_g \neq 1$,
then $g$ acts on $A$ by $x  \mapsto \lambda_g .x + \tau_g$ for some $\tau_g \in A$,
therefore there is a point $y = (1-\lambda_g)^{-1}\tau_g \in A$ fixed by $g$.

Hence
$G$ acts by translations on the variety $A$, so the quotient $Y$
is an abelian variety.

\end{proof}

\begin{remark}

The proof of the last theorem is very similar to the proof that 
a Kodaira dimension 0 variety with strongly semiample cotangent bundle is isomorphic to an abelian variety given in \cite{mistrurbi},
we just observe that strong semiampleness is not needed here, and that the characterisation of abelian varieties appearing in 
\cite{mistrurbi} is a corollary of Theorem \ref{smooth}.
\end{remark}

\section{The singular case}

In this section we show that the cotangent bundle of
a resolution of singularities of 
a singular quotient $Y=A/G$ cannot be asymptotically generically generated.

We will use the following result on extensions of symmetric differentials:

\begin{theorem}[Greb-Kebekus-Kovacs, \cite{GKK} Corollary 3.2]
\label{gkkthm}
Let Y be a normal variety. Fix an integer $k >0$. 
Suppose that there exists a normal variety $W$ and a finite surjective morphism 
$\gamma \colon W \to Y$,
such that for any resolution of singularities $p \colon \widetilde{W} \to W$
with simple normal crossing  (snc) exceptional locus $F = Exc(p)$
the sheaf
\[p_* \Sym^k \Omega^1_{\widetilde{W}}(log F)
\]
is reflexive.
Then 
for any resolution of singularities $\pi \colon   \widetilde{Y} \to Y$
with snc exceptional locus $E = Exc(\pi)$
the sheaf
$\pi_* \Sym^k \Omega^1_{\widetilde{Y}}(log E)$ is reflexive.

\end{theorem}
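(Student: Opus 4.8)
The plan is to verify reflexivity through the Hartogs-type criterion: a torsion-free coherent sheaf on a normal variety is reflexive if and only if (equivalently, is $S_2$ if and only if) for every open $V$ and every closed $Z \subseteq V$ of codimension $\geqslant 2$, every section over $V \setminus Z$ extends to $V$. Since $\pi_* \Sym^k \Omega^1_{\widetilde Y}(\log E)$ is visibly torsion-free, it suffices to produce such extensions. Writing $\iota \colon Y_{\mathrm{reg}} \hookrightarrow Y$ and $\Sym^{[k]}\Omega^1_Y := \iota_* \Sym^k \Omega^1_{Y_{\mathrm{reg}}}$ (automatically reflexive, as $Y$ is normal hence regular in codimension $1$), reflexivity of $\pi_* \Sym^k \Omega^1_{\widetilde Y}(\log E)$ becomes the assertion that every local section of $\Sym^{[k]}\Omega^1_Y$ pulls back, over $\pi^{-1}(Y_{\mathrm{reg}})$, to a symmetric differential extending across $E$ with at worst logarithmic poles. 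A standard preliminary input, which I would isolate as a lemma, is that $\pi_* \Sym^k \Omega^1_{\widetilde Y}(\log E)$ does not depend on the chosen snc resolution $\pi$ (compare via a dominating resolution and the behaviour of logarithmic differentials under morphisms of snc pairs); this lets me work with one convenient resolution at a time.

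The first real step transports a test section $\sigma \in H^0(V, \Sym^{[k]}\Omega^1_Y)$ to $W$. As $\gamma$ is finite surjective and we are in characteristic $0$, the canonical map $\gamma^* \Omega^1_Y \to \Omega^1_W$ is generically an isomorphism, so $\sigma$ pulls back to a reflexive symmetric differential on $\gamma^{-1}(V) \subseteq W$; moreover $\tfrac{1}{\deg\gamma}\operatorname{Tr}_{W/Y}$ splits $\oo_Y \hookrightarrow \gamma_* \oo_W$ and recovers $\sigma$ from its pullback. By the hypothesis applied to a resolution $p \colon \widetilde W \to W$ with snc exceptional locus $F$, the sheaf $p_* \Sym^k \Omega^1_{\widetilde W}(\log F)$ is reflexive, i.e.\ equals $\Sym^{[k]}\Omega^1_W$; hence the pullback of $\sigma$, viewed on $\widetilde W$, is a genuine section of $\Sym^k \Omega^1_{\widetilde W}(\log F)$.

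The second step descends this back to $\widetilde Y$. Let $\Gamma$ be a resolution of the dominant component of the fibre product $\widetilde W \times_Y \widetilde Y$, chosen so that all exceptional loci are snc, with morphisms $q \colon \Gamma \to \widetilde W$ (birational), $r \colon \Gamma \to \widetilde Y$ (generically finite of degree $\deg\gamma$) and common composite $\rho \colon \Gamma \to Y$. On the relevant open locus one has $r^* \pi^* \sigma = \rho^* \sigma = q^*(\text{the log form on }\widetilde W\text{ produced above})$, and since $q$ is a morphism of snc pairs the right-hand side extends to a logarithmic symmetric differential on $\Gamma$ (with poles along $\operatorname{Exc}(\rho)$). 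Applying $\tfrac{1}{\deg\gamma}\operatorname{Tr}_r$, and using $\operatorname{Tr}_r \circ r^* = \deg\gamma$, yields an extension of $\pi^* \sigma$ as a section of $\Sym^k \Omega^1_{\widetilde Y}(\log E)$ over the locus where $r$ is finite flat; since $\pi^* \sigma$ is already defined as a log form off a set of codimension $\geqslant 2$ and agrees there with this extension, it extends everywhere, which is exactly what reflexivity requires.

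I expect the main obstacle to be the control of logarithmic poles through this transfer: one must reconcile the exceptional divisors of $\widetilde W$, $\widetilde Y$ and $\Gamma$ and show that passing up to $\Gamma$ via $q$ and down via the trace of $r$ introduces no poles outside $E$ — equivalently, that the trace carries $r_* \Sym^k \Omega^1_{\Gamma}(\log \operatorname{Exc}(\rho))$ into $\Sym^k \Omega^1_{\widetilde Y}(\log E)$, and that the divisorial non-finite locus of $r$ does not obstruct the final extension. This is where finiteness of $\gamma$, the characteristic-$0$ hypothesis (for the trace splitting and for generic étaleness), and the snc assumptions are all used, via a local coordinate computation and the fact that logarithmic differentials pull back to logarithmic differentials under morphisms of snc pairs; the resolution-independence of $\pi_* \Sym^k \Omega^1_{\widetilde Y}(\log E)$ is the auxiliary ingredient that makes the snc bookkeeping on $\Gamma$ harmless.
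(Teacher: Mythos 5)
First, a point of order: the paper does not prove this statement at all --- it is quoted verbatim as \cite{GKK}, Corollary 3.2 and used as a black box in the proof of Theorem \ref{sing}. So there is no internal proof to compare against; what you have written is a reconstruction of the argument from [GKK, \S 3], and your outline does in fact follow that standard route (reflexive $=$ torsion-free $+$ Hartogs extension over codimension $2$; identify reflexivity of $\pi_*\Sym^k\Omega^1_{\widetilde Y}(\log E)$ with the statement that sections of $\Sym^{[k]}\Omega^1_Y$ pull back to log forms on $\widetilde Y$; transport the problem to $W$ via $\gamma$, solve it there by hypothesis, and descend through a resolution $\Gamma$ of the fibre product). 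The reductions in your first two paragraphs are sound: pullback of regular forms along $\gamma$ introduces no poles, $\gamma^{-1}(Y_{\mathrm{sing}})$ has codimension $\geqslant 2$ because $\gamma$ is finite, and reflexivity of $p_*\Sym^k\Omega^1_{\widetilde W}(\log F)$ is indeed equivalent to its being $\Sym^{[k]}\Omega^1_W$.

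The genuine gap is concentrated in the descent step, and you have located it yourself but not closed it. Two issues. (a) The trace $\mathrm{Tr}_r$ for the generically finite $r\colon\Gamma\to\widetilde Y$ is only the ``sum over the fibre'' off the branch divisor of $r$; that the resulting form on $\widetilde Y$ has at worst logarithmic poles along $E$ --- rather than along the whole branch divisor of $r$, or worse --- is precisely the content of the theorem, so deferring it to ``a local coordinate computation'' leaves the proof without its core. Moreover it is not true in general that the trace of a section of $\Sym^k\Omega^1_\Gamma(\log\mathrm{Exc}(\rho))$ lies in $\Sym^k\Omega^1_{\widetilde Y}(\log E)$ without an argument; the Kummer computation in Example \ref{kummsurf} of this very paper ($f^*(y_1^{-1}dy_1^2)=4\,dx_1^2$) shows how delicately log poles interact with ramification. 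The cleaner device, and the one that actually makes the argument work, is to dispense with the trace on the top square altogether: since $r$ is dominant and generically finite, pole orders are \emph{detected} after pullback --- if $\eta$ is a rational section of $\Sym^k\Omega^1_{\widetilde Y}(\log E)$ and $r^*\eta$ extends as a section of $\Sym^k\Omega^1_\Gamma(\log\mathrm{Exc}(\rho))$, then $\eta$ itself has at worst log poles along each divisor $D\subset E$, because a pole of $\eta$ of order $>1$ along $D$ forces a pole of order $>1$ of $r^*\eta$ along any divisor of $\Gamma$ dominating $D$ (locally $t=s^e u$ gives $dt/t=e\,ds/s+du/u$, so log stays log and worse-than-log stays worse-than-log). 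Applied to $\eta=\pi^*\sigma$, with $r^*\pi^*\sigma=q^*(\text{the log form on }\widetilde W)$, this finishes the proof with no trace needed above the bottom row. (b) A smaller slip: ``$\pi^*\sigma$ is already defined as a log form off a set of codimension $\geqslant 2$'' is not right --- it is a priori defined only off $E\cup\pi^{-1}(Y_{\mathrm{sing}})$, which contains the divisor $E$; extending across that divisor is the whole problem. The codimension-$2$ Hartogs step legitimately applies only to the locus over which $r$ fails to be finite, or, in the pole-detection version, is not needed at all.
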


\begin{rmk}

In \cite{GKK} the property  of the  sheaf $\pi_* \Sym^k \Omega^1_{\widetilde{Y}}(log E)$ being reflexive 
for any such resolution of singularities of $Y$
is stated by saying that  ``Extension theorem holds for $\Sym^k$-forms on $Y$'', and is considered in more generality 
for \emph{ reflexive tensor operations}
on differential 1-forms on logarithmic pairs $(Y, \Delta)$.

\end{rmk}

\begin{theorem}
\label{sing}

Let $X$ be  a smooth projective variety, and suppose that $X$ is birational to a quotient of an abelian variety 
$A$ by a finite group $G$, such that the quotient map is \'etale in codimension 1.
If $Y = A/G$ is singular, then the cotangent bundle $\Omega_X$ is not asymptotically generically generated.

\end{theorem}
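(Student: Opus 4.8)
The plan is to combine the extension theorem for symmetric differentials (Theorem \ref{gkkthm}) with the rigidity argument from the smooth case (Theorem \ref{smooth}). Since $G$ acts on $A$ étale in codimension $1$, no element of $G$ fixes a divisor, so an abelian variety is a natural candidate for the covering variety $W$ in Theorem \ref{gkkthm}: take $W = A$ and $\gamma = f \colon A \to Y = A/G$. First I would check the hypothesis of Theorem \ref{gkkthm}: for a resolution $p \colon \widetilde{A} \to A$ with snc exceptional locus $F$, the sheaf $p_* \Sym^k \Omega^1_{\widetilde{A}}(\log F)$ is reflexive. Because $A$ is already smooth, one may take $p = \mathrm{id}$ and $F = \emptyset$, so the sheaf in question is just $\Sym^k \Omega^1_A$, which is locally free, hence reflexive. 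Therefore the Extension Theorem holds for $\Sym^k$-forms on $Y$: for any resolution $\pi \colon \widetilde{Y} \to Y$ with snc exceptional locus $E$, the sheaf $\pi_* \Sym^k \Omega^1_{\widetilde{Y}}(\log E)$ is reflexive.

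Next I would identify the global sections. Since $Y$ has quotient singularities and $f$ is étale in codimension $1$, on the smooth locus $Y^{\mathrm{sm}}$ we have $f^* \Omega^1_{Y^{\mathrm{sm}}} \cong \Omega^1_{A \setminus f^{-1}(Y^{\mathrm{sing}})}$, so that the reflexive hull of $\Sym^k \Omega^1_Y$ satisfies $H^0(Y, (\Sym^k \Omega^1_Y)^{**}) \cong H^0(A, \Sym^k \Omega^1_A)^G$, which is a subspace of $H^0(A, \Sym^k \Omega^1_A)$ of dimension strictly less than $\rk \Sym^k \Omega^1_A$ unless $G$ acts trivially on $H^0(A,\Omega^1_A)$; by exactly the eigenvalue argument of Theorem \ref{smooth}, triviality of the $G$-action on $H^0(A, \Sym^k\Omega^1_A)$ for some $k$ forces $G$ to act by translations, contradicting the singularity of $Y$. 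On the other hand, the reflexivity statement from Theorem \ref{gkkthm} gives $H^0(\widetilde{Y}, \Sym^k \Omega^1_{\widetilde{Y}}(\log E)) \cong H^0(Y, \pi_*\Sym^k\Omega^1_{\widetilde{Y}}(\log E)) \cong H^0(Y, (\Sym^k\Omega^1_Y)^{**})$, and $H^0(\widetilde{Y}, \Sym^k \Omega^1_{\widetilde{Y}}) \hookrightarrow H^0(\widetilde{Y}, \Sym^k \Omega^1_{\widetilde{Y}}(\log E))$. Consequently, for every $k>0$,
\[
\dim H^0(\widetilde{Y}, \Sym^k \Omega^1_{\widetilde{Y}}) \;\leqslant\; \dim H^0(A, \Sym^k \Omega^1_A)^G \;<\; \rk \Sym^k \Omega^1_{\widetilde{Y}},
\]
the strict inequality holding precisely because $Y$ is singular. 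Since $\widetilde{Y}$ is smooth and birational to $X$, asymptotic generic generation of $\Omega_X$ would give $\dim H^0(\widetilde{Y}, \Sym^k \Omega^1_{\widetilde{Y}}) \geqslant \rk \Sym^k \Omega^1_{\widetilde{Y}}$ for some $k$, a contradiction. Hence $\Omega_X$ is not asymptotically generically generated.

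The main obstacle I anticipate is the bookkeeping around the logarithmic poles: one must be sure that no global $\Sym^k$-forms are gained by allowing log poles along $E$, i.e. that $H^0(\widetilde Y, \Sym^k\Omega^1_{\widetilde Y}(\log E))$ is still controlled by the $G$-invariants on $A$ rather than being strictly larger. This is exactly what the extension theorem buys us — the reflexive pushforward $\pi_*\Sym^k\Omega^1_{\widetilde Y}(\log E)$ agrees with $(\Sym^k\Omega^1_Y)^{**}$ because both are reflexive and agree on $Y^{\mathrm{sm}}$ — so the comparison with $H^0(A,\Sym^k\Omega^1_A)^G$ via the étale-in-codimension-$1$ map $f$ is legitimate. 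A secondary point to handle carefully is verifying that $A/G$ has klt (indeed quotient) singularities and is smooth in codimension appropriate to invoke the identifications on $Y^{\mathrm{sm}}$; this is standard for finite quotients and requires no new input. Once these identifications are in place, the dimension count above closes the argument.
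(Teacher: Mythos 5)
There is a genuine gap, and it sits exactly where the real difficulty of the theorem lies. Your argument reduces everything to the claim that
\[
\dim H^0(A, \Sym^k \Omega^1_A)^G \;<\; \rk \Sym^k \Omega^1_A
\quad\text{for every } k>0,
\]
``the strict inequality holding precisely because $Y$ is singular.'' This is false. The eigenvalue argument from Theorem \ref{smooth} only shows that triviality of the $G$-action on $H^0(A,\Sym^k\Omega_A)=\Sym^k H^0(A,\Omega_A)$ forces $G$ to act on $H^0(A,\Omega_A)$ by \emph{homotheties} $g\mapsto \chi_g\,\mathrm{Id}$; the further step ``homotheties $\Rightarrow$ translations'' in Theorem \ref{smooth} uses the smoothness of $Y=A/G$, which is exactly the hypothesis you do not have here. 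Concretely, take $A$ an abelian surface and $G=\{\pm 1\}$: the quotient $Y$ is singular (sixteen nodes), the quotient map is \'etale in codimension $1$, and yet $H^0(A,\Sym^2\Omega_A)^G = H^0(A,\Sym^2\Omega_A)$ has dimension $3=\rk\Sym^2\Omega_A$. So your dimension count does not close; the invariants can be everything even when $Y$ is singular.

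What the paper actually has to do after reaching the same chain of inclusions (which you set up correctly, including the use of Theorem \ref{gkkthm} with $W=A$) is to exhibit a strict inclusion elsewhere in the chain, namely
\[
H^0(\widetilde{Y}, \Sym^k \Omega_{\widetilde{Y}}) \subsetneq H^0(\widetilde{Y}, \Sym^k \Omega_{\widetilde{Y}}(\log E)).
\]
This requires a genuinely new local computation: once one knows $G$ acts by homotheties, the singularities of $Y$ are isolated cyclic quotient singularities of type $\frac{1}{r}(1,\dots,1)$, resolved by a single blow-up; lifting the $G$-action to the blow-up $\widetilde{A}\to A$ and comparing with $\widetilde{Y}$, one sees that the invariant form $(du_1)^k$ pulls back to $(dx_1)^k$ on $\widetilde{A}$, while $f^* dy_1 = m x_1^{m-1} dx_1$ vanishes along the exceptional divisor, so $(dx_1)^k$ cannot come from a \emph{holomorphic} symmetric differential on $\widetilde{Y}$ --- it only comes from a logarithmic one (e.g.\ $f^*(y_1^{-1}dy_1^2)=4\,dx_1^2$ in the Kummer case). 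Your own ``main obstacle'' paragraph correctly identifies that no sections are \emph{gained} by allowing log poles, but the point is the opposite one: sections are \emph{lost} when log poles are forbidden, and that loss is the entire content of the theorem. Without this step the proof does not go through.
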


\begin{proof}

We apply theorem \ref{gkkthm} to $Y$ and the finite map $$\gamma \colon A \to Y = A/G~.$$
As $A$ is smooth, the hypothesis trivially apply. Let us call $\pi \colon \widetilde{Y} \to Y$
a resolution of singularities of $Y$, with snc exceptional divisor $E$, 
then for any $m>0$
the sheaf
$\pi_* \Sym^m \Omega^1_{\widetilde{Y}}(log E)$ is reflexive. Let 
$F \subset Y$ be the locus where the map $\gamma$ is not \'etale,
this locus having codimension at least $2$.
So 
\[
H^0(Y, \pi_* \Sym^m \Omega^1_{\widetilde{Y}}(log E)) = 
H^0(Y \setminus (Y^{sing} \cup F), \pi_* \Sym^m \Omega^1_{\widetilde{Y}}(log E))
\]
and we have the following inclusions for all $m>0$:
\[
H^0(\widetilde{Y}, \Sym^m \Omega_{\widetilde{Y}})
\subseteq H^0 (\widetilde{Y}, \Sym^m \Omega_{\widetilde{Y}} (log E))
= H^0(Y, \pi_* \Sym^m \Omega^1_{\widetilde{Y}}(log E))=
\]
\[
H^0(Y \setminus (Y^{sing} \cup F), \pi_* \Sym^m \Omega^1_{\widetilde{Y}}(log E))
\subseteq H^0(A, \Sym^m \Omega_A)^G \subseteq H^0(A, \Sym^m \Omega_A)
\]
(the inclusion 
$H^0(Y \setminus (Y^{sing} \cup F), \pi_* \Sym^m \Omega^1_{\widetilde{Y}}(log E))
\subseteq H^0(A, \Sym^m \Omega_A)^G$
holding by pulling back symmetric forms on $Y \setminus (Y^{sing} \cup F)$ 
to $G$-invariant symmetric forms on 
$A \setminus \gamma^{-1}(Y^{sing} \cup F)$ and extending them  to $A$).

Now, suppose by contradiction that $X$ has asymptotically generically generated cotangent bundle 
$\Omega_X$,
then the same holds for the cotangent bundle $\Omega_{\widetilde{Y}}$ as  
$\widetilde{Y}$ is smooth and birational to $X$, so for some $k>0$:
\[
\rk (\Sym^k \Omega_{\widetilde{Y}}) \leqslant \dim H^0(\widetilde{Y}, \Sym^k \Omega_{\widetilde{Y}})
\leqslant \dim H^0(A, \Sym^k \Omega_A)
= \rk ( \Sym^k \Omega_A),
\]
but the two vector bundles have the same rank, so all the inclusions above are equalities,
in particular 
$$H^0(A, \Sym^k \Omega_A)^G = H^0(A, \Sym^k \Omega_A)$$
and 
$$H^0(A, \Sym^k \Omega_A) \cong
H^0(\widetilde{Y}, \Sym^k \Omega_{\widetilde{Y}})
= H^0 (\widetilde{Y}, \Sym^k \Omega_{\widetilde{Y}} (log E))~.$$

Let us remark that by theorem \ref{smooth} above (and its proof), we know that if $Y = A/G$ is smooth
then $G$ acts by translations on $A$ and indeed the inclusions above are equalities in this case.

Let us show that if $Y = A/G$ is singular the   equalities above
 cannot hold.
 
First, notice that we can apply the same arguments as in the proof of
theorem \ref{smooth} and show that 
$H^0(A, \Sym^k \Omega_A)^G = H^0(A, \Sym^k \Omega_A)$ implies that 
$G$ acts on $H^0(A, \Omega_A)$ by homothethies, \emph{i.e.} 
the action $G \to GL(H^0(A, \Omega_A))$ is given by 
$g \mapsto \chi_g Id_{H^0(A, \Omega_A)}$ for some 
character $\chi \colon G \to \CC^*$.

Therefore we can study in detail the local structure of the singularities 
and their resolutions.
Let us observe that 
 the singular points of
$Y = A/G$ are isolated, 
and can be resolved by one blowing-up each singular point.

In fact if a point $a \in A$ has a non trivial stabilizer 
$\textrm{Stab}_a \subseteq G$,
the action 
$\textrm{Stab}_a  \to GL(H^0(A, \Omega_A))$ is faithful,
as we can assume that the action of $G$ on $A$ is faithful,
and a non-trivial element $g \in G$ that acts trivially on $GL(H^0(A, \Omega_A))$
would be a translation and could not be in $\textrm{Stab}_a$.
Then the stabilizer $\textrm{Stab}_a$ is a cyclic group,
as it injects in $ \mathbb{C}^* \cdot Id_{H^0(A, \Omega_A)}$.

So the stabilizer is a cyclic group that  acts on a sufficiently small 
coordinate  neighborhood of $a$ 
as mutiplication on the coordinates by roots of unity,
\[
g \colon (u_1 , \dots, u_n) \mapsto (\chi_g u_1 , \dots, \chi_g u_n)
\]
therefore the point $a$ is the only point in the neighborhood that is stabilized by an element of $G$,  
and the image of $a$ in $Y$ is an isolated cyclic quotient singularity of type 
$\frac{1}{r}(1,\dots,1)$, with $r$ being the order of the stabilizer.

This kind of singularities are are known to be  isomorphic to cones on a Veronese variety 
(cf. \cite{young})
and to be resolved by a single blowing up,
so we can study the differentials on the resolution of singularities
through the following commutative diagram:
\[
\squaremap{\widetilde{A}}{f}{\widetilde{Y}}{p}{\pi}{A}{\gamma}{Y}
\]
where 
$\pi$ is the resolution of $Y$ 
by blowing up the finite number of points in $Y^{sing}$,
$\gamma$ is the quotient map,
$\widetilde{A}$ is the blowing-up of $A$ in the points $\gamma^{-1} (Y^{sing})$,
and $f$ is a covering of degree $|G|$ ramified along the exceptional locus of $p$.
Furthermore the action of $G$ on $A$ extends to an action of $G$ on 
$\widetilde{A}$, whose quotient is 
$f \colon \widetilde{A} \to \widetilde{Y}$.

To construct the diagram and prove the constructions detailed above,
we can provide first a local description and then check that 
the action of $G$ extends globally to $\widetilde{A}$ and that its quotient is 
$\widetilde{Y}$.

Observe first that we can choose a sufficiently small
coordinate neighborhood 
$U_a \subset A$ around
a point $a\in A$ with nontrivial stabilizer $\textrm{Stab}_a \subseteq G$,
in such a way that $U_a$ is stable for the action of 
$\textrm{Stab}_a$, and that all of its translates by other elements of $G$ 
are pairwise disjoint. Then for an element $g \in G \setminus \textrm{Stab}_a$,
the other point $g\cdot a \in A$ has the conjugated of $\textrm{Stab}_a$ as stabilizer,
 the quotient of the nighborhood $U_a / \textrm{Stab}_a$ 
is isomorphic to the quotient 
$(g\cdot U_a) / g \cdot \textrm{Stab}_a \cdot g^{-1}$,
and these quotients are identified 
to neighborhoods of the point $\gamma (a) \in Y$.

Now an element in the stabilizer  acts on $U_a$ by multiplying all coordinates by the same root of the unity,
so we can lift the action of  $\textrm{Stab}_a$ on $U_a$
to the blowing-up $\widetilde{U_a}$ of the point $a$,
and remark that the exceptional divisor is fixed by this action,
so the quotient $\widetilde{U_a}/ \textrm{Stab}_a$ is isomorphic to the resolution 
of the singularity of the neighborhood $\gamma (U_a)$ obtained by 
blowing up the singular point.
If an element  $g$ is not in the stabilizer $\textrm{Stab}_a$, 
it will give an isomorphism 
between $\widetilde{U_a}$ and $\widetilde{g \cdot U_a}$.
Repeating this argument with all points having a non-trivial stabilzer and shrinking the open neighborhoods if needed,
we can see that the action of $G$ on $A$ extends to an action 
on $\widetilde{A}$.

The quotient $\widetilde{A} / G$ is normal, 
the map  $f \colon \widetilde{A} \to \widetilde{Y}$ is clearly $G$-invariant so it factors 
through $\widetilde{A} / G$, and the map $\widetilde{A} / G \to \widetilde{Y}$ is a 
bijective map between normal varieties therefore is an isomorphism.
So quotient map for the actopn of $G$ on $\widetilde{A}$ is $f \colon \widetilde{A} \to \widetilde{Y}$.

In order to describe the map $f$, let us observe that we can choose local coordinates 
$(u_1, \dots, u_{n}) \in A$ around a point $a \in A$ with non trivial stabilizer,
in such a way that 
the action of $\textrm{Stab}_a$ on $A$ is given by  $$g\cdot (u_1, \dots, u_{n}) = (\chi_g u_1, \dots, \chi_g u_n)~.$$
And we can choose local coordinates in the blowing-up  $(x_1, w_2 \dots, w_n) \in \widetilde{A}$ 
around a point $x \in \widetilde{A}$ in the exceptional divisor, with $p(x) = a \in A$, in such a way that 
the blowing-up map $p \colon \widetilde{A} \to A$ is given in local coordinates by

\[
(x_1, w_2 \dots, w_n) \mapsto (x_1, x_1 w_2 \dots, x_1 w_n) ~,
\]
\emph{i.e.}
 the exceptional divisor $Exc(p)$ is given locally  by the equation $x_1 =0$, and
 the blowing up is defined locally by
 $w_i = u_i / u_1$.

Then the action of $g \in \textrm{Stab}_a$ on $A$ lifts to 
an action on $\widetilde{A}$ which is given in local 
coordinates by:

\[
g \cdot (x_1, w_2 \dots, w_n) = (\chi_g x_1, w_2 \dots, w_n) ~,
\]
with fixed divisor $x_1=0$. Therefore we have a covering 
$f \colon \widetilde{A} \to \widetilde{Y}$ ramified along the divisor $x_1 =0$,
that in those local on $\widetilde{A}$ and local coordinates $(y_1, \dots , y_n)$ on $\widetilde{Y}$ is given by:

\[
(x_1, w_2 \dots, w_n) \mapsto (x_1^m, w_2 \dots, w_n) ~.
\]

Finally, the equalities above give us the following isomorphisms of vector spaces:

\[
\begin{array}{ccccc}
H^0(\widetilde{A}, \Sym^k \Omega_{\widetilde{A}}) & 
\stackrel{f^*}{{\longleftarrow}} &  
H^0(\widetilde{Y}, \Sym^k \Omega_{\widetilde{Y}})\\
\lmapup{p^*} & { } & \rmapdown{\cong}\\
H^0(A,\Sym^k \Omega_{A}) & \stackrel{\gamma^*}{{\longleftarrow}} &
 H^0(Y,\pi_* \Sym^k \Omega_{\widetilde{Y}}(log E))
\end{array}
\]

Now $p^*$ is an isomorphism as $p$ is birational,
however we see that 

\[
f^* \colon H^0(\widetilde{Y}, \Sym^k \Omega_{\widetilde{Y}}) 
\to H^0(\widetilde{A}, \Sym^k \Omega_{\widetilde{A}})
\]
 cannot be an isomorphism:
 in fact, given the local coordinates above, we can choose a base 
 $du_1, \dots, du_n$ of $H^0(A,\Omega_{A})$.
 Now the holomorphic symmetric differential $(du_1)^k$ is pulled back to 
 $(dx_1)^k \in H^0(\widetilde{A}, \Sym^k \Omega_{\widetilde{A}})$,
 and this cannot be the pull-back of a holomorphic symmetric differential 
 in $H^0(\widetilde{Y}, \Sym^k \Omega_{\widetilde{Y}})$,
in fact  $f^* dy_1 = mx_1^{m-1}dx_1$  vanishes along the divisor $x_1=0$,
and $f^* dy_j = d w_j$ for $j = 2, \dots, n$ 
so no holomorphic symmetric differential on $\widetilde{Y}$ can pull back to $(dx_1)^k$.

\end{proof}

This completes the proof of Theorem \ref{mainthnm}. 
Corollary \ref{maincor} follows as the Minimal Model Program 
and Abundance Conjecture hold in dimension 3, 
the remark on Iitaka dimension being given below.

\section{Remarks and examples}

\subsection{Iitaka dimension}
In \cite{mistrurbi} we defined the Iitaka dimension of an Asymptotically Generically Generated vector bundle $E$ on a variety $X$ 
as 
\[
k(X, E) :=\limsup_{k>0} (\dim \mathrm{Im} 
(\varphi_{\Sym^k E}))
\]
where 
$\varphi_{\Sym^k E}  \colon X \dashrightarrow \mathbb{G}r(H^0(X, \Sym^kE), 
\rk \Sym^k E) $ is the Kodaira map for the vector bundle $\Sym^k E$. 

We remarked that for strongly semiample vector bundles this is the same as 
the Iitaka dimension of the line bundle $\det E$, but we were not able to establish whether this holds in general for all (AGG) vector bundles.
However, this is the case if an AGG vector bundle has Iitaka dimension 0.

In fact we proved that for $k\gg 0$ the maps $\varphi_{\Sym^k E}$
factor through the Iitaka fibration 
$\varphi_{\det E, \infty}$ for the line bundle $\det E$, 
and a projection.
We were not able to prove that the projection they factor through is
a generically  finite map in general, 
but this must be the case if $k(X,E)=0$,
otherwise the image of the Iitaka fibration:
\[
\varphi_{\det E, \infty} = \varphi_{(\det E)^{\otimes m}} \colon X 
\dashrightarrow \pp (H^0(X, (\det E)^{\otimes m}))
\]
would be contained in a hyperplane of 
$\pp (H^0(X, (\det E)^{\otimes m}))$.

In particular a variety having AGG cotangent bundle of Iitaka dimension 0
must have Kodaira dimension 0.

\subsection{Compact K\"ahler case}

We remark that most of the techniques used hold in the projective case,
in particular Theorem \ref{gkpthm}, and this is the reason for 
stating the main results for smooth projective varieties.

However the Theorem proved in \cite{mistrurbi}, characterizing abelian varieties as those varieties of 
Kodaira dimension $0$ having some symmetric power of the cotangent bundle globally generated, 
works as well to characterize complex tori among compact K\"ahler varieties,
so
the main question of having a bimeromorphic characterization  makes sense also for K\"ahler 
or complex varieties.

According to \cite{hp} the MMP should work for K\"ahler varieties,
and does work in dimension 3, with a suitable characterization of 
torus quotients,
so in dimension 3 the same characterization 
for varieties bimeromorphic to complex tori should hold,
and it is worth asking whether a bimeromorphic (or biholomprphic) 
caracterization of complex tori holds in any dimension. 
We leave this to future investigations.

\subsection{Higher Kodaira dimension}

In \cite{hoering}, H\"oring gives a classification of varieties
$X$ of any Kodaira dimension $k(X)$
having (strongly) semiample cotangent bundle $\Omega_X$,
in particular he proves 
that these varieties are finite \'etale quotients of 
a product of an abelian variety
and a variety $Y$ with ample canonical bundle and same Kodaira dimension as $X$.

In case of Kodaira dimension 0 we proved in \cite{mistrurbi}
that we do not need to look at \'etale quotients,
and here we prove that we have a birational characterization as well.
It would be interesting to look at varieties with higher Kodaira dimension
and AGG cotangent bundle as well.

\subsection{Kummer varieties}

The Kummer surface,
being a $K3$-surface,
in known not to have any holomorphic symmetric differentials
(cf. \cite{kobayashi}). However we can use this as an example 
of theorem \ref{sing} and can be generalized to
the higher dimensional case:

\begin{example} 
\label{kummsurf}
	Let $A$ be an abelian surface, $Y = A/ \{\pm 1 \}$,
	and $\widetilde{X} \to Y$ the resolution of singularities, 
	where $\widetilde{X}$ is a K3 surface. 
	The singular surface $Y$ is not terminal, 
	however it does have klt singularities.
Then 
we can apply Theorem \ref{gkkthm} and have a diagram as in Theorem 	\ref{sing}:
\[
\squaremap{\widetilde{A}}{f}{\widetilde{X}}{p}{\pi}{A}{\gamma}{Y}
\]
with $f$ covering of degree 2, ramified over the exceptional 
locus of $p$.  For $m=2$ we have:

\[
H^0(\widetilde{X}, \Sym^2 \Omega_{\widetilde{X}} (log E)) 
= H^0(Y \setminus Y^{sing}, \Sym^2 \Omega_Y)
=
\]
\[
= H^0(A, \Sym^2 \Omega_A) ^{\pm 1} = H^0(A, \Sym^2 \Omega_A)
= H^0(\widetilde{A}, \Sym^2 \Omega_{\widetilde{A}}) 
\]

Now, following notations as in Theorem \ref{sing},
we see that a basis $du_1, du_2$ of $H^0(A, \Omega_A)$ 
pulls back to the basis
of 
$H^0(\widetilde{A}, \Omega_{\widetilde{A}})$
 that locally looks like
$dx_1,w_2 dx_1 + x_1dw_2$.

The basis $du_1^2, du_1du_2, du_2^2$ of
$H^0(A, \Sym^2 \Omega_A)$ is invariant for the action of
$\{ \pm 1 \}$, 
and  pulls back to a basis 
of $H^0(\widetilde{A}, \Sym^2  \Omega_{\widetilde{A}})$.
Now clearly $d x_1^2$ is not coming 
from a global symmetric differential 
$H^0 (\widetilde{X}, \Sym^2 \Omega_{\widetilde{X}})$
as $f^* d y_1 = 2x_1 dx_1$,
however $f^* (\frac{1}{y_1} dy_1^2) = 4 dx_1^2$, 
so we can see the reason why
\[
p^* H^0(A, \Sym^2 \Omega_A) \cong 
f^* H^0(\widetilde{X}, \Sym^2 \Omega_{\widetilde{X}} (log E))
\]

\begin{example}
\label{kummvar}
In the same way, given an abelian variety $A$ of dimension at least 3,
a Kummer variety $Y = A / \{ \pm 1 \}$ has isolated singularities,
and is terminal, so we can apply the constructions in Theorem \ref{sing}.
In that case the second symmetric power of the cotangent bundle cannot be 
generically generated, however we have that 
\[
p^* H^0(A, \Sym^2 \Omega_A) \cong 
f^* H^0(\widetilde{X}, \Sym^2 \Omega_{\widetilde{X}} (log E))
\]
as in the 2-dimensional case.

\end{example}

\begin{remark}

If we consider exterior powers instead of symmetric differentials,
then we do not need to look at the logarithmic complex:
in \cite{gkkp} it is proven that for any KLT variety $X$ with log resolution $\pi \colon \widetilde{X} \to X$ and for any 
$1 \leqslant p \leqslant \dim X$
the sheaf $\pi_* \Omega_{\widetilde{X}}^p $ is reflexive,
therefore:

\[
H^0(X \setminus X^{sing}, \Omega_X^p) \cong 
H^0(\widetilde{X}, \Omega_{\widetilde{X}}^p) ~.
\]

This is not the case for symmetric powers, 
as it is shown in Examples \ref{kummsurf} and \ref{kummvar} above:
given an abelian surface $A$ and the corresponding
Kummer surface $\widetilde{X} $, resolution of the quotient 
$Y = A / \{ \pm 1 \}$,
 we have:

\[
H^0(A, \Omega_A^2)  \cong  H^0(A, \Omega_A^2)^{\{ \pm 1 \} } 
\cong H^0(Y \setminus Y^{sing}, \Omega_Y^2) \cong \CC \cong
H^0({\widetilde{X}}, \Omega_{\widetilde{X}}^2)
 ~,
\]
however 
\[
\mathbb{C}^3 \cong H^0(Y \setminus Y^{sing}, \Sym^2 \Omega^1_{Y}) \cong  
H^0(A, \Sym^2 \Omega^1_A) \neq H^0(X, \Sym^2 \Omega^1_X) = 0  ~,
\]
and similarly in the higher dimensional case.
\end{remark}

\end{example}

%
%
%

\newcommand{\etalchar}[1]{$^{#1}$}
\providecommand{\bysame}{\leavevmode\hbox to3em{\hrulefill}\thinspace}
\providecommand{\MR}{\relax\ifhmode\unskip\space\fi MR }
\providecommand{\MRhref}[2]{%
  \href{http://www.ams.org/mathscinet-getitem?mr=#1}{#2}
}
\providecommand{\href}[2]{#2}

\end{document}